\newcommand{\R}{\mathbb{R}}
\newcommand{\N}{\mathbb{N}}
\newcommand{\ep}{\varepsilon}
\newcommand{\pa}{\partial}
\DeclareMathOperator{\lifespan}{LifeSpan}
\newtheorem{theorem}{Theorem}[section]
\newtheorem{lemma}[theorem]{Lemma}
\newtheorem{proposition}[theorem]{Proposition}
\newtheorem{corollary}[theorem]{Corollary}
\theoremstyle{remark}
\newtheorem{remark}{Remark}[section]
\theoremstyle{definition}
\newtheorem{definition}{Definition}[section]
\newtheorem{example}{Example}
\numberwithin{equation}{section}
\begin{document}
\begin{center}
\Large{{\bf
Sharp lifespan estimates 
of blowup solutions to semilinear 
wave equations 
\\
with time-dependent effective damping
}}
\end{center}

\vspace{5pt}

\begin{center}
Masahiro Ikeda%
\footnote{
Department of Mathematics, Faculty of Science and Technology, Keio University, 3-14-1 Hiyoshi, Kohoku-ku, Yokohama, 223-8522, Japan/Center for Advanced Intelligence Project, RIKEN, Japan, 
E-mail:\ {\tt masahiro.ikeda@keio.jp/masahiro.ikeda@riken.jp}}, 
Motohiro Sobajima%
\footnote{
Department of Mathematics, 
Faculty of Science and Technology, Tokyo University of Science,  
2641 Yamazaki, Noda-shi, Chiba, 278-8510, Japan,  
E-mail:\ {\tt msobajima1984@gmail.com}}
and 
Yuta Wakasugi%
\footnote{
Graduate School of Science and Engineering, Ehime University, 
3, Bunkyo-cho, Matsuyama, Ehime, 790-8577, Japan, 
E-mail:\ {\tt wakasugi.yuta.vi@ehime-u.ac.jp}.}
\end{center}

\newenvironment{summary}{\vspace{.5\baselineskip}\begin{list}{}{%
     \setlength{\baselineskip}{0.85\baselineskip}
     \setlength{\topsep}{0pt}
     \setlength{\leftmargin}{12mm}
     \setlength{\rightmargin}{12mm}
     \setlength{\listparindent}{0mm}
     \setlength{\itemindent}{\listparindent}
     \setlength{\parsep}{0pt}
     \item\relax}}{\end{list}\vspace{.5\baselineskip}}
\begin{summary}
{\footnotesize {\bf Abstract.}
In this paper we consider the initial value problem for 
the semilinear wave equation with 
an effective damping
\begin{equation}\label{DW}
\tag*{(DW)}
\begin{cases}
\pa_t^2 u(x,t) -\Delta u(x,t) + b(t)\pa_t u(x,t)=|u(x,t)|^p, 
& (x,t)\in \R^N \times (0,T),
\\
u(x,0)=\ep f(x), 
& x\in \R^N,
\\ 
\pa_t u(x,0)=\ep g(x),  
& x\in \R^N,
\end{cases}
\end{equation}
where $N\in \N$, $0<b\in C^1([0,\infty))$, $1<p\leq 1+\frac{2}{N}$
and $\ep>0$ is a parameter 
describing the smallness of initial data.  
Here the coefficient $b(t)$ of the damping term 
is assumed to be ``effective''.
The interest is the behavior of lifespan of solutions in view of 
the asymptotic profile of $b(t)$ as $t\to\infty$. 
The simple cases $b(t)=(1+t)^{-\beta}$ ($\beta\in (-1,1)$) 
and the threshold case $b(t)=1+t$ in the sense of overdamping 
are discussed in Ikeda--Wakasugi \cite{IkeWa15} and Ikeda--Inui \cite{IkeInpre}, respectively. 
In the present paper we discuss general damping terms
with a certain assumption. 
The result of this paper is the sharp lifespan estimates 
of blowup solutions to \ref{DW} including the typical case $b(t)=(1+t)(1+\log(1+t))$. 
The proof of upper bound of lifespan 
is a modification of the test function method given in \cite{IkedaSobajima3} 
and the one of lower bound is 
based on the technique of scaling variables 
introduced in Gallay--Raugel \cite{GaRa98} (for $N=1$) 
and Wakasugi \cite{Wakasugi17} (for $N\geq 2$).

}
\end{summary}

{\footnotesize{\it Mathematics Subject Classification}\/ (2010): Primary: 35L71.}

{\footnotesize{\it Key words and phrases}\/: %
Wave equation with time-dependent damping, 
Small data blowup,
Lifespan.}

\section{Introduction}
In this paper we consider the blowup phenomena for 
the initial value problem of the semilinear 
wave equation with an effective damping of the form 
\begin{equation}\label{ndw}
\begin{cases}
\pa_t^2 u(x,t) -\Delta u(x,t) + b(t)\pa_t u(x,t)=|u(x,t)|^p, 
& (x,t)\in \R^N \times (0,T),
\\
u(x,0)=\ep f(x), 
& x\in \R^N,
\\ 
\pa_t u(x,0)=\ep g(x),  
& x\in \R^N,
\end{cases}
\end{equation}
where $N\in\N$,
$b\in C^1([0,\infty))$, 
$\ep>0$ is a small parameter
and 
$f,g$ 
are given functions satisfying 
\[
(f,g)\in (H^1(\R^N)\cap L^1(\R^N))\times (L^2(\R^N)\cap L^1(\R^N)).
\]
The term $b(t)\pa_t u$ 
describes the damping effect 
which provides the reduction of the energy as a wave.
Therefore the size of $b(t)$ could affect 
to the profile of solution for sufficiently large $t$.  
The interest of this paper is to 
clarify the effect of damping 
coefficient in terms of 
the behavior of the lifespan with respect to $\ep$.

The equation in \eqref{ndw} with $b(t)\equiv 1$ (without a nonlinear term)
was introduced in Cattaneo \cite{Ca58} and Vernotte \cite{Ve58} to consider 
a model of heat conduction with finite propagation property. 
This equation is composed by ``balance law'' $u_t={\rm div}\,q$ 
and ``time-delayed Fourier law'' $\tau q_t+q=\nabla u$, where $q$ is the heat flux
and $\tau$ is sufficiently small. 

In the case $b(t)\equiv 1$, the equation \eqref{ndw} becomes 
the usual damped wave equation and therefore
there are many 
previous works dealing with 
global existence 
and blowup of solutions to \eqref{ndw} with lifespan estimates
(see e.g., 
Li--Zhou \cite{LiZh95}, Todorova--Yordanov \cite{ToYo01}, Nishihara \cite{Ni03Ib},
Ikeda--Wakasugi \cite{IkeWa15},
Ikeda--Ogawa \cite{IO16}, Lai--Zhou \cite{LaZhpre}).
As a summary, 
the Fujita exponent $p=1+2/N$ plays a role of critical exponent 
dividing the global existence and blowup of small solutions. The lifespan estimates 
are given as the following: 
\[
\lifespan(u)\sim 
\begin{cases}
C\ep^{-(\frac{1}{p-1}-\frac{N}{2})^{-1}}
&
{\rm if\ }1<p<1+\frac{2}{N},
\\
\exp(C\ep^{-(p-1)})
&
{\rm if\ }p=1+\frac{2}{N},
\\
\infty
&
{\rm if\ }p>1+\frac{2}{N}
\end{cases}
\]
for sufficiently small $\ep>0$.

In the case $b(t)=(1+t)^{-\beta}$ with $\beta\in (-1,1)$, 
Lin--Nishihara--Zhai \cite{LNZ12} found that 
the critical exponent in this case remains $p=1+2/N$. 
Later, the damping is generalized to the profile of $b(t)$ as $t\to\infty$ 
by D'Abbicco--Lucente \cite{DL13} and D'Abbicco--Lucente--Reissig \cite{DLR13} 
and then the critical exponent remains $p=1+2/N$ again. 

We have to mention that 
$b(t)=\frac{\mu}{1+t}$ is so-called scale-invariant 
damping and in this case 
the effect of wave structure cannot be ignored
in the sense of existence of global solutions.
Actually, in Ikeda--Sobajima \cite{IkedaSobajima2} 
a blowup result for $1<p\leq p_S(N+\mu)$ is given 
for small damping case $\mu\in (0,\frac{N^2+N+2}{N+2})$, where 
$p_S(n)$ is well-known Strauss exponent given by the positive root of the quadratic equation
$(n-1)p^2-(n+1)p-2=0$.
We also refer the reader to
D'Abbicco \cite{DAbbicco15} and D'Abbicco--Lucente--Reissig \cite{DLR15}
for global existence results and determination of the critical exponent for the special case $\mu=2$,
respectively.
In the scattering case $b(t)=(1+t)^{-\beta}$ with $\beta>1$, 
Lai--Takamura \cite{LTarxiv} proved the blowup result for $1<p<p_S(N)$, 
and therefore, in this case the damping term can be ignored.

On the other hand, if $b(t)=(1+t)^{-\beta}$ with $\beta<-1$, then 
the situation is completely different. 
In this case, according to the result by Ikeda--Wakasugi \cite{IWoverdamping}, the critical exponent disappears, that is, 
there exists small global solution of \eqref{ndw} for every $p>1$. 
 This phenomenon is so-called overdamping. 
This means that the case $\beta=-1$ can be regarded as 
the threshold for dividing effective and overdamping cases
which is also discussed by Wirth \cite{Wi04, Wi06, Wi07JDE, Wi07ADE} 
for the linear equation.

Recently, Ikeda--Inui \cite{IkeInpre} 
gave the blowup result 
for the case $b(t)=(1+t)^{-\beta}$, $\beta\in [-1,1)$
with the critical nonlinearity  $p=1+2/N$
together with sharp lifespan estimates as follows
\[
\lifespan(u)\sim 
\begin{cases}
\exp(C\ep^{-(p-1)})
&
{\rm if\ }b(t)=(1+t)^{-\beta},\ \beta\in (-1,1),\ p=1+\frac{2}{N},
\\
\exp\exp(C\ep^{-(p-1)})
&
{\rm if\ }b(t)=1+t,\ \beta=-1,\ p=1+\frac{2}{N}.
\end{cases}
\]

The first purpose of the present paper 
is to determine the critical exponent dividing the global existence and blowup of small solutions
to \eqref{ndw} in more general damping coefficients including 
\[
b(t)=1+t,\quad b(t)=(1+t)(1+\log(1+t)).
\]
The second is to give a sharp estimate for lifespan of blowup solutions to 
\eqref{ndw} in view of the small parameter $\ep>0$. 

Our main result for the (implicit) upper bound of
lifespan is as follows.  

\begin{theorem}\label{main}
Assume that 
\begin{gather}
\label{b.positive}
b(t)>0, \quad t\geq 0,
\\
\label{b'.limit}
b_0:=\limsup_{t\to\infty}
\Big(\frac{|b'(s)|}{b(s)^2}\Big)<1
\end{gather}
and 
$(f, g)\in (H^2(\R^N)\cap L^1(\R^N))\times (H^1(\R^N)\cap L^1(\R^N))$ 
with 
\[
\int_{\R^N}f(x)\,dx
+ 
B_0\int_{\R^N}g(x)\,dx
>0,
\] 
where 
\[
B_0:=\int_{0}^\infty \exp\Big(-\int_0^sb(\sigma)\,d\sigma\Big)\,ds<\infty
\] 
(which is valid under \eqref{b.positive} and \eqref{b'.limit}, see Lemma \ref{lem_b_properties}).
If $1<p\leq 1+\frac{2}{N}$, then 
there exists a positive constant $C>0$ such that 
the solution $u_\ep$ of \eqref{ndw} satisfies 
\[
B(\lifespan(u_\ep))
\leq 
\begin{cases}
C\ep^{-(\frac{1}{p-1}-\frac{N}{2})^{-1}}
&\text{if }1<p<1+\frac{2}{N},
\\
\exp(C\ep^{-(p-1)})
&\text{if }p=1+\frac{2}{N}, 
\end{cases}
\]
where
\begin{align}
\label{def_B}
B(t):=\int_{0}^{t}\frac{1}{b(\sigma)}\,d\sigma.
\end{align}
Additionally, if 
\begin{align}
\label{notover}
\frac{1}{b(t)}\notin L^1(0,\infty), 
\end{align}
then small data blowup phenomena occurs. 
\end{theorem}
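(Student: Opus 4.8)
The plan is to use the test function method adapted to the damped wave equation.

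The plan is to run a test-function argument in the spirit of \cite{IkedaSobajima3}, but with a test function tuned to the \emph{effective} time scale $B(t)=\int_0^t b(\sigma)^{-1}\,d\sigma$ so that the constant-coefficient Fujita bounds reappear after the change of clock $s=B(t)$. First I would fix $T^*<\lifespan(u_\ep)$, take a nonnegative test function $\varphi\in C^2$, multiply the equation in \eqref{ndw} by $\varphi$ and integrate over $\R^N\times(0,T^*)$. Integrating by parts in both variables and discarding the favorable spatial boundary terms (legitimate since $u(\cdot,t)\in H^2\cap L^1$ and $\varphi$ is Gaussian-weighted), I obtain the identity
\begin{align*}
\int_0^{T^*}\!\!\int_{\R^N}|u|^p\varphi\,dx\,dt
=\int_0^{T^*}\!\!\int_{\R^N}u\,L^*\varphi\,dx\,dt+\mathcal I_0,
\qquad
L^*\varphi:=\pa_t^2\varphi-\Delta\varphi-\pa_t(b\varphi),
\end{align*}
where $\mathcal I_0$ collects the $t=0$ contributions of the data. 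Choosing the initial values of $\varphi$ to match the linear profile, I expect $\mathcal I_0=c\,\ep\big(\int_{\R^N}f+B_0\int_{\R^N}g\big)$ with $c>0$, so the hypothesis $\int f+B_0\int g>0$ makes $\mathcal I_0>0$.

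Second, I would take $\varphi(x,t)=\Phi(x,t)\,\eta\!\big(B(t)/\Lambda\big)^{2p'}$, where $p'=p/(p-1)$, $\eta$ is a standard time cutoff, $\Lambda>0$ is a scale to be optimized, and $\Phi(x,t)=(1+B(t))^{-N/2}\exp\!\big(-|x|^2/(4(1+B(t)))\big)$ is the Gaussian adapted to the effective diffusion in the $B$-clock. The point of this choice is that $\Phi$ solves the \emph{heat} equation in the $s=B(t)$ variable, so in $L^*\Phi$ the terms $\Delta\Phi$ and the diffusive part of $\pa_t(b\Phi)$ cancel, leaving only (i) the wave correction $\pa_t^2\Phi$, which carries a factor $b^{-2}$ and is therefore controlled by $b_0<1$ together with Lemma \ref{lem_b_properties}, and (ii) terms in which a derivative hits the cutoff, supported on $B(t)\sim\Lambda$. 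A weighted Hölder inequality then gives
\begin{align*}
\mathcal I_0\le \Big(\int_0^{T^*}\!\!\int_{\R^N}|u|^p\varphi\Big)^{1/p}
\Big(\int_0^{T^*}\!\!\int_{\R^N}\frac{|L^*\varphi|^{p'}}{\varphi^{p'-1}}\Big)^{1/p'},
\end{align*}
and the self-similar structure lets me evaluate the last factor, which scales like a fixed power of $\Lambda$ in the subcritical range and like $(\log\Lambda)^{1/p'}$ at criticality.

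Third, to convert this into a lifespan bound I would absorb $\big(\iint|u|^p\varphi\big)^{1/p}$ — itself dominated by the same $\Lambda$-power coming from $L^*\varphi$ on the cutoff region — and reach an inequality of the form $\ep\lesssim \Lambda^{-\theta}$ when $1<p<1+\frac2N$, with $\theta$ equal to $\frac{1}{p-1}-\frac N2$ up to normalization, and $\ep^{-(p-1)}\gtrsim \log\Lambda$ when $p=1+\frac2N$. Taking $\Lambda=B(T^*)$ and letting $T^*\uparrow\lifespan(u_\ep)$ then yields $B(\lifespan(u_\ep))\lesssim \ep^{-(\frac{1}{p-1}-\frac N2)^{-1}}$ and $B(\lifespan(u_\ep))\lesssim\exp(C\ep^{-(p-1)})$, respectively. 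Finally, under \eqref{notover} one has $B(t)\to\infty$ as $t\to\infty$, so the finiteness of $B(\lifespan(u_\ep))$ forces $\lifespan(u_\ep)<\infty$, which is the small-data blowup statement.

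The step I expect to be the main obstacle is controlling the wave-correction and damping terms in $L^*\varphi$ for a \emph{general} $b$: I must verify that $\pa_t^2\Phi$ and the non-cancelling part of $\pa_t(b\Phi)$ are genuinely lower order, i.e. that $\iint|L^*\varphi|^{p'}\varphi^{1-p'}$ has exactly the same $\Lambda$-scaling as in the constant-coefficient Fujita problem. This is precisely where $b_0<1$ and the quantitative estimates on $b$, $b'$, $B$ and $B_0$ from Lemma \ref{lem_b_properties} enter, and where the ``modification'' of \cite{IkedaSobajima3} really lies.
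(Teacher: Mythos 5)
Your overall theme (test function method run on the effective clock $B(t)$, in the spirit of \cite{IkedaSobajima3}) is the right one, but the specific test function you propose makes the argument collapse at its central step. The identity you use pairs $u$ with the \emph{adjoint} operator $L^*\varphi=\pa_t^2\varphi-\Delta\varphi-\pa_t(b\varphi)$, and for your Gaussian $\Phi(x,t)=(1+B(t))^{-N/2}e^{-|x|^2/(4(1+B(t)))}$ one has $b\,\pa_t\Phi=\pa_s\Phi=+\Delta\Phi$ in the clock $s=B(t)$, hence
\begin{align*}
-\Delta\Phi-\pa_t(b\Phi)=-\Delta\Phi-b'\Phi-b\,\pa_t\Phi=-2\Delta\Phi-b'\Phi .
\end{align*}
The two diffusive terms \emph{add} instead of cancelling: the forward Gaussian solves the forward equation $(b\pa_t-\Delta)\Phi=0$, whereas the adjoint equation would require a backward kernel, which blows up at the final time. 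The leftover $-2\Delta\Phi$ is a principal-size term supported everywhere, and its H\"older quotient $\iint|\Delta\Phi|^{p'}\Phi^{1-p'}dx\,dt$ is of order one (it even diverges for growing $b$), so your scheme ends with $\ep\lesssim 1$ and no lifespan bound at all. A second, independent failure is the time measure: even the genuine cutoff terms, e.g.\ the one where $\pa_t$ in $\pa_t(b\varphi)$ hits $\eta(B(t)/\Lambda)$, equal $\tfrac{C}{\Lambda}\Phi\,\eta^{2p'-1}\eta'$ with \emph{no} $1/b$ factor, and integrating their H\"older quotient in $dt$ over $\{B(t)\sim\Lambda\}$ produces $\Lambda^{-p'}\int_{\Lambda/2}^{\Lambda}b(t(s))\,ds$; for the very cases the theorem targets ($b(t)=1+t$ gives $b(t(s))=e^{s}$, and worse for $b(t)=(1+t)(1+\log(1+t))$) this grows exponentially in $\Lambda$, destroying your claimed ``fixed power of $\Lambda$'' scaling. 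This is exactly why the paper does not use a Gaussian: it takes $\Phi(t)\psi_R(x,t)$, where $\Phi(t)=\int_t^\infty\exp(-\int_t^s b(\sigma)d\sigma)ds$ is an exact $x$-independent solution of the conjugate equation (since $\pa_t\Phi-b\Phi=-1$, so $\pa_t^2\Phi-\pa_t(b\Phi)=0$), so that every error term carries a derivative of the cutoff; and since $\Phi\sim 1/b$ (Lemma \ref{harmonic}), the same factor both tames the damping term ($b\Phi\le B_2$) and converts $dt$ into the effective measure ($\int_0^{t_R}\Phi\,dt\le R$).

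Two further gaps. First, with a Gaussian spatial weight your data term $\mathcal I_0$ is a Gaussian-weighted integral of $f,g$, whose sign is \emph{not} controlled by the hypothesis $\int f+B_0\int g>0$ (that hypothesis only concerns the plain means); in the paper the cutoff is constant in $x$ on expanding regions, so $j(R)\to\tfrac12\int_{\R^N}(f+B_0 g)\,dx>0$ by dominated convergence, and the specific normalization $\Phi(0)=B_0$, $\pa_t\Phi(0)=b(0)B_0-1$ is what produces precisely this combination. Second, your single-scale ``H\"older and absorb'' step cannot give the critical bound: at $p=1+\tfrac2N$ it yields $c\ep+X\le CX^{1/p}$ with $X=\iint|u|^p\varphi$, an inequality containing no $\Lambda$-dependence whatsoever, hence no lifespan estimate. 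The sharp $\exp(C\ep^{-(p-1)})$ bound requires keeping the nonlinear term on the left, running the inequality over a \emph{continuum} of scales $R$, and solving the resulting differential inequality for $Y(R)=\int_0^R y(r)\tfrac{dr}{r}$ — this is the paper's Lemma \ref{key}, imported from \cite{IkedaSobajima3}, and it is the ingredient your outline is missing.
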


The proof of Theorem \ref{main} is 
done by using a test function method with a solution of the conjugate
equation $\pa_t^2\widetilde{\Phi}-\Delta \widetilde{\Phi}-\pa_t(b(t)\widetilde{\Phi})=0$ 
and rescaled cut-off functions. 
Also we use the idea for deriving upper bound of lifespan in \cite{IkedaSobajima3}. 

\begin{example}\label{exam1}
\begin{itemize}
\item[(1)] 
The typical cases $b(t)=(1+t)^{-\beta}$
satisfy the condition \eqref{b'.limit}
if $\beta\in (-\infty,1)$.
In particular, we have
$b_0=0$ with 
$\frac{|b'(s)|}{b(s)^2}=\beta(1+t)^{\beta-1}$.
Also, $b(t)=(1+t)^{-\beta}$ satisfies the condition \eqref{notover} if $\beta \in [-1,\infty)$.
\item[(2)] In the scale invariant case $b(t)=\mu(1+t)^{-1}>0$ with $\mu>1$, 
we see that
$b_0=\mu^{-1} <1$
and \eqref{b'.limit} holds.
\end{itemize}
\end{example}

It is worth noticing that the lifespan estimate in Theorem \ref{main} 
is true even if we consider the following parabolic problem 
non-trivial initial data:
\begin{equation}\label{nh}
\begin{cases}
b(t)\pa_t u(x,t)-\Delta u(x,t)=u(x,t)^p, 
& (x,t)\in \R^N \times (0,T),
\\
u(x,0)=\ep f(x)\geq 0, 
& x\in \R^N.
\end{cases}
\end{equation}
This is clear if we consider the Fujita type equation 
with change of variables $u(x,t)=v(x,B(t))$
\begin{equation}\label{nhF}
\begin{cases}
\pa_s v(x,s)-\Delta v(x,s) =v(x,s)^p, 
& (x,s)\in \R^N \times (0,S),
\\
v(x,0)=\ep f(x)\geq 0, 
& x\in \R^N.
\end{cases}
\end{equation}
To selfcontainedness, we would give a short proof of lower lifespan estimates in Appendix. 


Next, we study the lower bound of lifespan of solutions to \eqref{ndw}.
In the following, we assume
\eqref{b.positive}, \eqref{notover}, and
the following stronger version of \eqref{b'.limit}:
\begin{align}%
\label{5.b'.limit}
	&\mbox{There exist}\ \gamma >0 \ \mbox{and}\ C>0 \ \mbox{such that}
	\quad
	\frac{ |b'(t)|}{b(t)^2} \le C (t+1)^{-\gamma}
	\quad \mbox{for}\ t > 0.
\end{align}%
We denote by
$H^{s,m}(\mathbb{R}^N)$
with
$s \in \mathbb{Z}_{\ge 0}$ and $m \ge 0$
the weighted Sobolev space
\begin{align*}%
	H^{s,m}(\mathbb{R}^N) = \left\{
		f \in L^2(\mathbb{R}^N) ;
			\| f \|_{H^{s,m}} = \sum_{|\alpha| \le s} \|(1+|x|)^m \partial_x^{\alpha}f \|_{L^2}
			< \infty \right\}.
\end{align*}%
We consider the initial data belonging to
\begin{align}%
\label{5.ini}
	(f, g ) \in H^{2,m}(\mathbb{R}^N) \times H^{1,m}(\mathbb{R}^N),
\end{align}%
where
$m$
satisfies
\begin{align}%
\label{5.m}
	m = 1 \ (N=1), \quad m > \frac{N}{2} \ (N\ge 2).
\end{align}%
We remark that
\eqref{5.ini}--\eqref{5.m} imply
$(f, g) \in (H^2(\mathbb{R}^N)\cap L^1(\mathbb{R}^N))
\times  (H^1(\mathbb{R}^N)\cap L^1(\mathbb{R}^N))$.
For the nonlinearity, we assume
\begin{align}%
\label{5.p}
	1<p<\infty \ (N = 1,2), \quad 1< p \le \frac{N}{N-2} \ (N\ge 3).
\end{align}%
Under these assumptions, the existence of a unique local solution
\begin{align*}%
	u_{\varepsilon} \in C([0,T) ; H^{2,m}(\mathbb{R}^N)) \cap
		C^1([0,T) ; H^{1,m}(\mathbb{R}^N))
\end{align*}%
to \eqref{ndw}
has already been proved by \cite[Propositions 3.5, 3.6]{Wakasugi17}.
Thus, we define
$\lifespan (u_{\varepsilon})$
by the maximal existence time of the local solution.
In this section, following the argument in
\cite{FujiwaraIkedaWakasugi, Wakasugi17},
we prove the sharp lower bound of the lifespan.

\begin{theorem}\label{dw.below}
Assume that
\eqref{b.positive}, \eqref{notover}, \eqref{5.b'.limit}, and \eqref{5.ini}--\eqref{5.p}
are satisfied.
Then, there exist
$\varepsilon_0 > 0$
and
$C>0$
such that for every
$\varepsilon \in (0, \varepsilon_0]$,
one has
\begin{align}%
\label{5.dw.below}
	B(\lifespan(u_\ep))
	\geq 
		\begin{cases}
			C\ep^{-(\frac{1}{p-1}-\frac{N}{2})^{-1}}
				&\text{if }1<p<1+\frac{2}{N},
				\\
			\exp(C\ep^{-(p-1)})
				&\text{if }p=1+\frac{2}{N}, 
				\\
			\infty
				&\text{if }p>1+\frac{2}{N}.
		\end{cases}
\end{align}%
\end{theorem}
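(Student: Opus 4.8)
The plan is to exploit the effective character of the damping by passing to the parabolic time scale $\tau=B(t)$ and then to self-similar variables, so as to reduce \eqref{ndw} to a controlled perturbation of the Fujita equation \eqref{nhF}, whose lifespan is classical. First I would set $\tau=B(t)$ with $B$ as in \eqref{def_B}; since \eqref{notover} gives $1/b\notin L^1(0,\infty)$, the map $t\mapsto\tau$ is a strictly increasing $C^1$-bijection of $[0,\infty)$ onto itself. Using $\pa_t=b^{-1}\pa_\tau$ and $\pa_t^2=b^{-2}\pa_\tau^2-b'b^{-2}\pa_\tau$, the equation in \eqref{ndw} becomes
\[
\pa_\tau u-\Delta u=|u|^p-\frac{1}{b^2}\pa_\tau^2 u+\frac{b'}{b^2}\pa_\tau u ,
\]
that is, the Fujita equation together with two ``wave corrections''. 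Because $B(\lifespan(u_\ep))$ is exactly the image of the lifespan under $t\mapsto\tau$, the right-hand sides of \eqref{5.dw.below} are precisely the lifespan of \eqref{nhF} measured in $\tau$; the whole task is therefore to prove that the two corrections do not alter this parabolic lifespan.

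Next I would introduce the scaling variables
\[
s=\log(1+\tau),\qquad y=\frac{x}{\sqrt{1+\tau}},\qquad v(y,s)=(1+\tau)^{N/2}u(x,t).
\]
Under this rescaling the Fujita part turns into $\pa_s v=\mathcal{L}v+e^{(1-\frac N2(p-1))s}|v|^p+R$, where $\mathcal{L}v=\Delta v+\tfrac12\,y\cdot\nabla v+\tfrac N2 v$ and $R$ gathers the rescaled wave corrections. The sign of $1-\frac N2(p-1)$ separates the three regimes, being negative, zero, or positive according as $p>1+\frac2N$, $p=1+\frac2N$, or $p<1+\frac2N$. The self-similar generator $\mathcal{L}$ admits the Gaussian $e^{-|y|^2/4}$ as invariant ground state, $\mathcal{L}e^{-|y|^2/4}=0$, with a spectral gap below the zero eigenvalue; this is the mechanism, inherited from \cite{GaRa98} for $N=1$ and \cite{Wakasugi17} for $N\ge2$, that produces the parabolic decay and makes the Fujita exponent appear.

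The engine of the argument is a weighted energy estimate in the scaling variables. Working in weighted $L^2$ spaces adapted to $\mathcal{L}$ and prescribed by \eqref{5.m} --- whose polynomial weight in particular forces $f,g\in L^1(\R^N)$, hence finite mass --- I would derive differential inequalities in $s$ for the weighted norms of $v$ and $\nabla v$. The second-order correction $b^{-2}\pa_\tau^2 u$ enters the natural energy through a term in $\|\pa_s v\|^2$ carried with a small coefficient that vanishes as $s\to\infty$ (the damping being effective, $b$ is large on the relevant range), while the first-order correction is controlled by \eqref{5.b'.limit}, whose bound $|b'|/b^2\le C(t+1)^{-\gamma}$ becomes integrable in $s$ after the change of variables. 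Estimating the nonlinearity $|v|^p$ in the weighted norm by Sobolev embedding, which \eqref{5.p} permits, one is led schematically to $\frac{d}{ds}\|v\|\lesssim e^{(1-\frac N2(p-1))s}\|v\|^{p}+(\text{decaying corrections})$; combined with the local well-posedness in $H^{2,m}\times H^{1,m}$ from \cite{Wakasugi17}, this allows the solution to be continued.

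Finally I would close a bootstrap argument: assuming an a priori bound of size $\sim\ep$ for the weighted energy on a maximal interval $[0,s^*)$, the energy inequalities recover a strictly smaller bound there and fix $s^*$ by balancing the integrated nonlinear growth against $\ep$. The model equation $y'=e^{(1-\frac N2(p-1))s}y^{p}$ with $y(0)\sim\ep$ yields $s^*=\infty$ for $p>1+\frac2N$, $s^*\sim\ep^{-(p-1)}$ for $p=1+\frac2N$, and $e^{s^*}\sim\ep^{-(\frac1{p-1}-\frac N2)^{-1}}$ for $p<1+\frac2N$; since $B(\lifespan(u_\ep))=\tau^*=e^{s^*}-1$, this reproduces exactly \eqref{5.dw.below}. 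I expect the principal obstacle to be the rigorous control of the second-order-in-time correction $b^{-2}\pa_\tau^2 u$ within the weighted energy: one must show that it is a genuine singular perturbation of the parabolic flow, using simultaneously the growth of $b$, the decay supplied by \eqref{5.b'.limit}, and the spectral gap of $\mathcal{L}$, so as to absorb the spurious $\pa_s v$ and $\pa_s^2 v$ contributions without degrading the sharp thresholds in the three cases.
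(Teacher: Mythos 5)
Your proposal follows essentially the same route as the paper's proof: the identical scaling variables $s=\log(B(t)+1)$, $y=(B(t)+1)^{-1/2}x$ in the spirit of Gallay--Raugel and Wakasugi, a Gaussian-mode reduction (your spectral-gap appeal for $\mathcal{L}$ is exactly what the paper implements through the decomposition $v=\alpha(s)\varphi_0+f$ with mass-zero remainder and Hardy-type inequalities), weighted energy estimates in which the second-order correction carries the vanishing coefficient $e^{-s}b(t(s))^{-2}$ and the $b'/b^2$ term decays thanks to \eqref{5.b'.limit}, and a bootstrap whose model ODE yields the three regimes of \eqref{5.dw.below}. Only one small point of justification should be fixed: the smallness of the coefficient of the wave correction is not due to ``$b$ being large on the relevant range'' (it may tend to zero, e.g.\ $b(t)=(1+t)^{-\beta}$ with $0<\beta<1$) but to the fact that $b(t)^2(B(t)+1)\to\infty$, which is precisely \eqref{5.b.limit} in Lemma \ref{lem_b_properties}.
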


The proof is based on the method of scaling variables
introduced by
Gallay and Raugel \cite{GaRa98}.
In \cite{Wakasugi17, FujiwaraIkedaWakasugi},
the global existence, asymptotic behavior,
and the lower bound of the lifespan of solutions to
the nonlinear problem \eqref{ndw} with
$b(t) = (1+t)^{-\beta} \ (-1\le \beta<1)$ are studied,
which is a typical example satisfying
\eqref{b.positive}, \eqref{notover} and \eqref{5.b'.limit}.

\begin{remark}
From the proof of the supercritical case $p > 1+\frac{2}{N}$ with small additional argument,
we can also have the asymptotic profile of the global solution $u_{\ep}$.
More precisely, we can prove
\begin{align*}
	\| u_{\ep}(\cdot,t) - \alpha^{\ast} (4\pi B(t))^{-\frac{N}{2}} e^{-\frac{|x|^2}{4B(t)}} \|_{L^2}
	=  O(B(t)^{-\frac{N}{4}-\lambda})
\end{align*}
with some $\lambda > 0$
as $t\to \infty$,
where $\alpha^{\ast} = \lim_{t\to \infty}\int_{\mathbb{R}^N}u_{\ep}(x,t)\,dx$.
For the detail, see \cite[Section 3.9]{Wakasugi17}.
\end{remark}

Here, we note that
$b(t) = \mu(1+t)^{-1}$
with $\mu > 0$ does not satisfy \eqref{5.b'.limit} (see Example \ref{exam1} (2) above).
Thus, this case is excluded for the lower bound of the lifespan. 

To illustrate the result of the present paper, 
we give several lifespan estimates for the typical damping coefficients.
\begin{corollary}\label{cor}
Under the assumptions in Theorems \ref{main} and 
\ref{dw.below},
for respective cases, 
one has the following explicit bound of $\lifespan(u_\ep)$:
\begin{itemize}
\item[\bf (i)] If $b(t)=(1+t)^{-\beta}$ with $\beta \in (-1,1)$,
then 
\[
\lifespan(u_\ep)
\sim  
\begin{cases}
C\ep^{-(1+\beta)^{-1}(\frac{1}{p-1}-\frac{N}{2})^{-1}}
&\text{if }1<p<1+\frac{2}{N},
\\
\exp(C\ep^{-(p-1)})
&\text{if }p=1+\frac{2}{N}.
\end{cases}
\]
\item[\bf (ii)] If $b(t)=1+t$, then 
\[
\lifespan(u_\ep)
\sim 
\begin{cases}
\exp(C\ep^{-(\frac{1}{p-1}-\frac{N}{2})^{-1}})
&\text{if }1<p<1+\frac{2}{N},
\\
\exp(\exp(C\ep^{-(p-1)}))
&\text{if }p=1+\frac{2}{N}.
\end{cases}
\]
\item[\bf (iii)] If $b(t)=(1+t)(1+\log (1+t))$, then 
\[
\lifespan(u_\ep)
\sim 
\begin{cases}
\exp(\exp(C\ep^{-(\frac{1}{p-1}-\frac{N}{2})^{-1}}))
&\text{if }1<p<1+\frac{2}{N},
\\
\exp(\exp(\exp(C\ep^{-(p-1)})))
&\text{if }p=1+\frac{2}{N}.
\end{cases}
\]
\item[\bf (iv)] If $b(t)=\prod_{k=1}^n\ell_k(t)$ $(n\geq 3)$ with 
$\ell_1(t)=1+t$ and  $\ell_{k+1}(t)=1+\log (\ell_k(t))$, then 
\[
\lifespan(u_\ep)
\sim 
\begin{cases}
\exp^{[n]}(C\ep^{-(\frac{1}{p-1}-\frac{N}{2})^{-1}})
&\text{if }1<p<1+\frac{2}{N},
\\
\exp^{[n+1]}(C\ep^{-(p-1)})
&\text{if }p=1+\frac{2}{N}
\end{cases}
\]
where 
$\exp^{[1]}(t)=\exp(t)$ and $\exp^{[k+1]}(t)=\exp(\exp^{[k]}(t))$.
\item[\bf (v)]
If $b(t)=\mu(1+t)^{-1}$ with
$\mu > 1$,
then 
\[
\lifespan(u_\ep)
\le  
\begin{cases}
C\ep^{-(\frac{2}{p-1}-N)^{-1}}
&\text{if }1<p<1+\frac{2}{N},
\\
\exp(C\ep^{-(p-1)})
&\text{if }p=1+\frac{2}{N}.
\end{cases}
\]
\end{itemize}
\end{corollary}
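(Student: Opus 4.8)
The plan is to observe that both Theorem~\ref{main} and Theorem~\ref{dw.below} express their conclusions through the single increasing quantity $B(\lifespan(u_\ep))$, with $B$ as in \eqref{def_B}. Hence the corollary reduces to three routine steps: (a) checking that each listed $b$ satisfies the hypotheses of the relevant theorem(s); (b) computing $B(t)=\int_0^t b(\sigma)^{-1}\,d\sigma$ explicitly; and (c) inverting the inequality for $B(\lifespan(u_\ep))$ to read off the corresponding bound for $\lifespan(u_\ep)$ through the increasing inverse $B^{-1}$. The two-sided conclusion $\lifespan(u_\ep)\sim\cdots$ holds precisely when both theorems apply, i.e.\ when the stronger decay condition \eqref{5.b'.limit} is available; otherwise only the upper bound from Theorem~\ref{main} survives.

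First I would dispose of the hypotheses. For (i)--(iv) one checks \eqref{b.positive}, the $\limsup$ condition \eqref{b'.limit}, the non-overdamping condition \eqref{notover}, and the polynomial-decay condition \eqref{5.b'.limit}. For $b(t)=(1+t)^{-\beta}$ this is Example~\ref{exam1}(1); for the remaining cases a short computation gives $|b'|/b^2\le C(1+t)^{-\gamma}$ with $\gamma>0$ (for the product $b=\prod_{k=1}^n\ell_k$ one finds $|b'|/b^2=\sum_{j=1}^n (b\,\ell_1\cdots\ell_j)^{-1}\le n(1+t)^{-2}$, using $\ell_k\ge1$), so all four conditions hold and both theorems are in force, yielding the symbol $\sim$. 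The decisive contrast is case (v): for $b(t)=\mu(1+t)^{-1}$ one has $|b'(t)|/b(t)^2\equiv\mu^{-1}$, a positive constant, so \eqref{5.b'.limit} fails and Theorem~\ref{dw.below} does not apply; only \eqref{b'.limit} (with $b_0=\mu^{-1}<1$, Example~\ref{exam1}(2)) and \eqref{notover} hold, so one obtains the upper bound alone, which is why (v) is stated with ``$\le$''.

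Next I would compute $B$. Cases (i)--(iii) are elementary antiderivatives, giving $B(t)=\frac{(1+t)^{\beta+1}-1}{\beta+1}$, $B(t)=\log(1+t)$, and $B(t)=\log(1+\log(1+t))$, respectively. Case (iv) is the heart of the matter: writing $b=\prod_{k=1}^n\ell_k$ and using the recursion $\ell_{k+1}=1+\log\ell_k$, one proves by induction that $\ell_{k+1}'=(\ell_1\cdots\ell_k)^{-1}$, whence $\ell_{n+1}'(t)=b(t)^{-1}$ and therefore $B(t)=\ell_{n+1}(t)-\ell_{n+1}(0)$ exactly. Since $\ell_{n+1}$ is an $n$-fold iterated logarithm of $1+t$, its inverse is an $n$-fold iterated exponential, i.e.\ $B^{-1}=\exp^{[n]}$ up to affine corrections at each level. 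Substituting the right-hand side $R(\ep)=\ep^{-(\frac1{p-1}-\frac N2)^{-1}}$ (subcritical) or $R(\ep)=\exp(C\ep^{-(p-1)})$ (critical) and applying $B^{-1}$ then produces exactly the towers in (iv), with (ii) and (iii) being $n=1,2$; in the critical case $R(\ep)$ is itself an exponential, so composing with $B^{-1}=\exp^{[n]}$ raises the tower height by one, giving $\exp^{[n+1]}$. Cases (i) and (v) follow by inverting the power-type $B$, using $\tfrac12\bigl(\tfrac1{p-1}-\tfrac N2\bigr)^{-1}=\bigl(\tfrac2{p-1}-N\bigr)^{-1}$ for (v).

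The only genuine subtlety lies in step (c) for (iv). At the level of an $n$-fold iterated exponential, additive constants and multiplicative factors do \emph{not} simply disappear: one cannot naively discard the shift $\ell_{n+1}(0)$, the $+1$ in each $\ell_{k+1}$, or the prefactor produced on inverting. The correct bookkeeping is to absorb all of them into the \emph{innermost} constant $C$, exploiting that the argument $\ep^{-(\frac1{p-1}-\frac N2)^{-1}}$ (resp.\ $\ep^{-(p-1)}$) tends to $\infty$ as $\ep\to0$; by monotonicity of each $\exp^{[k]}$ one then verifies the upper and lower bounds with two slightly different innermost constants $C_2>C_1>0$, which is exactly the meaning of ``$\sim$''. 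No analytic input beyond Theorems~\ref{main} and~\ref{dw.below} is required here; the remaining work is purely the inversion of the explicit $B$.
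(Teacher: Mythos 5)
Your proposal is correct and follows exactly the route the paper intends (the paper states Corollary \ref{cor} without a written proof, as an immediate consequence of Theorems \ref{main} and \ref{dw.below}): verify the hypotheses \eqref{b.positive}, \eqref{b'.limit}, \eqref{notover}, \eqref{5.b'.limit} for each coefficient, compute $B$ explicitly --- in particular your identity $\ell_{n+1}'(t)=b(t)^{-1}$, giving $B(t)=\ell_{n+1}(t)-1$ in case (iv) --- and invert, noting that \eqref{5.b'.limit} fails for $b(t)=\mu(1+t)^{-1}$ so that case (v) retains only the upper bound (cf.\ Remark \ref{rem_cor}). Your care with absorbing additive and multiplicative constants into the innermost argument of the iterated exponentials, valid since the argument tends to $\infty$ as $\ep\to 0$, is exactly the bookkeeping needed to justify the symbol $\sim$.
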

\begin{remark}\label{rem_cor}
In the case $b(t)=\mu(1+t)^{-1}$ with $\mu > 1$, 
we do not obtain any lower bound for lifespan.
The difficulty comes from the scale-invariant property of 
the damping term which breaks the advantage of the method of 
scaling variables in the proof of Theorem \ref{dw.below}.
Moreover, 
from another point of view, 
the upper lifespan estimate of solutions with scale-invariant damping
has a wave-like profile as in \cite{IkedaSobajima2} at least when $\mu\in (0,\frac{N^2+N+2}{N+2})$. 
If $\mu\geq \frac{N^2+N+2}{N+2}$, 
then we do not know whether the upper bound of lifespan in Corollary \ref{cor} {(v)} is sharp or not. 
\end{remark}

This paper is organized as follows: 
In Section 2, we collect important properties of the damping coefficient $b(t)$ 
and the profile of the solution to the linear conjugate equation of \eqref{ndw}. 
Section 3 is devoted to prove upper bound for lifespan of solutions to 
\eqref{ndw} via a (time-rescaled) test function method 
with the solution of the linear conjugate equation. 
To close this paper, we give a proof of lower bound of lifespan of solutions to 
\eqref{ndw} via the method of scaling variables.

\section{Preliminaries}

Here we collect some basic properties of the damping coefficient $b(t)$ 
and the behavior of solution to 
\[
\pa_t^2\Phi-\Delta \Phi -\pa_t(b(t)\Phi)=0.
\] 
At the end, we introduce a family of cut-off functions with time-rescaling 
$s \sim \int_0^tb(\sigma)^{-1}\,d\sigma$. 

\subsection{Basic properties of the damping coefficient $b(t)$}

First we prove some basic properties of $b(t)$,  which we frequently use later. 
\begin{lemma}\label{lem_b_properties}
Assume that \eqref{b.positive} and \eqref{b'.limit} are satisfied. 
Then one has
\begin{gather}
\label{b.integrable}
B_0:=\int_{0}^\infty \exp\Big(-\int_0^sb(\sigma)\,d\sigma\Big)\,ds<\infty,
\\
\label{b.limit}
\lim_{t\to\infty}
\left(\frac{1}{b(t)}\exp\Big(-\int_0^tb(\sigma)\,d\sigma\Big)\right)=0.
\end{gather}
Assume further that \eqref{notover} and \eqref{5.b'.limit} are satisfied. Then 
\begin{gather}
\label{5.b.limit}
	\lim_{t\to \infty} \frac{1}{b(t)^2 (B(t)+1)} = 0.
\end{gather}
\end{lemma}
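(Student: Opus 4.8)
The plan is to treat the three assertions in turn, drawing on \eqref{b'.limit} for \eqref{b.integrable}--\eqref{b.limit} and on the stronger \eqref{5.b'.limit} together with \eqref{notover} for \eqref{5.b.limit}. For the first two, I would fix $\delta$ with $b_0<\delta<1$ and use \eqref{b'.limit} to choose $t_0\ge 0$ so that $|b'(t)|/b(t)^2\le\delta$ for all $t\ge t_0$. Since $\frac{d}{dt}(1/b)=-b'/b^2$, this reads $|(1/b)'|\le\delta$ on $[t_0,\infty)$, whence $1/b(t)\le 1/b(t_0)+\delta(t-t_0)$, equivalently $b(t)\ge\big(1/b(t_0)+\delta(t-t_0)\big)^{-1}$. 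Integrating this lower bound gives $\int_{t_0}^t b(\sigma)\,d\sigma\ge\frac1\delta\log\big(1+\delta b(t_0)(t-t_0)\big)$, so that $\exp(-\int_0^t b)\le C_0\big(1+\delta b(t_0)(t-t_0)\big)^{-1/\delta}$ with $C_0=\exp(-\int_0^{t_0}b)$.

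The decisive feature is that $1/\delta>1$. Hence the displayed upper bound is integrable on $[t_0,\infty)$, which combined with continuity on $[0,t_0]$ yields \eqref{b.integrable}. For \eqref{b.limit} I would multiply this decay estimate by the linear bound $1/b(t)\le 1/b(t_0)+\delta(t-t_0)$: the product is controlled by a constant times $(t+1)^{1-1/\delta}$, which tends to $0$ because $1-1/\delta<0$.

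For \eqref{5.b.limit} I set $\phi:=1/b=B'$, so $\phi>0$ and, by \eqref{5.b'.limit}, $|\phi'|=|b'|/b^2\le C(t+1)^{-\gamma}$; in particular $\phi'(t)\to 0$. Integrating the bound on $|\phi'|$ shows $\phi(t)\lesssim(t+1)^{1-\min(\gamma,1)}$ (with a logarithmic factor when $\gamma=1$), hence $B(t)=\int_0^t\phi=o(t^2)$. The core of the argument is a monotonicity trick: for fixed $\eta>0$ I consider $G_\eta(t):=\phi(t)^2-\eta(B(t)+1)$. Since $\phi'(t)\to 0$, there is $T_\eta$ with $2\phi'(t)<\eta$ for $t\ge T_\eta$, and then $G_\eta'(t)=\phi(t)\big(2\phi'(t)-\eta\big)<0$; thus $G_\eta$ is decreasing on $[T_\eta,\infty)$ and has a limit $\ell\in[-\infty,\infty)$.

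The main obstacle is ruling out $\ell\ge 0$, and here the growth bound $B(t)=o(t^2)$ does the work. If $\ell\ge 0$ then $\phi(t)^2\ge\eta(B(t)+1)\ge\eta B(t)$ for large $t$, i.e.\ $B'(t)\ge\sqrt{\eta B(t)}$; integrating $(2\sqrt{B})'=\phi/\sqrt{B}\ge\sqrt{\eta}$ forces $B(t)\gtrsim t^2$, contradicting $B(t)=o(t^2)$ (here \eqref{notover} ensures $B(t)\to\infty$, so the comparison is genuine). Therefore $\ell<0$, so $\phi(t)^2/(B(t)+1)<\eta$ for all large $t$. As $\eta>0$ is arbitrary and the quantity is nonnegative, this gives $\lim_{t\to\infty}\phi(t)^2/(B(t)+1)=0$, which is precisely \eqref{5.b.limit}.
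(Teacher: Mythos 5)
Your proof is correct. For \eqref{b.integrable} and \eqref{b.limit} you argue essentially as the paper does: \eqref{b'.limit} gives $\left|\frac{d}{dt}b(t)^{-1}\right|\le\delta<1$ for $t\ge t_0$, hence $1/b$ grows at most linearly, hence $\exp\left(-\int_0^t b(\sigma)\,d\sigma\right)\lesssim (1+t)^{-1/\delta}$ with $1/\delta>1$, which is integrable and, multiplied by the linear bound on $1/b$, tends to zero; the paper integrates the cruder bound $b(t)\ge[(1-\delta/2)t]^{-1}$ where you integrate $[1/b(t_0)+\delta(t-t_0)]^{-1}$ exactly, an immaterial difference. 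For \eqref{5.b.limit}, however, your route is genuinely different. The paper's proof is a short direct computation: it writes $b(t)^{-2}=b(\tau)^{-2}+2\int_\tau^t b(\sigma)^{-1}\frac{d}{d\sigma}\left[b(\sigma)^{-1}\right]d\sigma$, bounds the integral by $2C(1+\tau)^{-\gamma}B(t)$ via \eqref{5.b'.limit}, divides by $B(t)+1$, uses \eqref{notover} (i.e.\ $B(t)\to\infty$) to kill the term $b(\tau)^{-2}/(B(t)+1)$, and lets $\tau\to\infty$. You instead derive the growth bound $B(t)=o(t^2)$ by integrating \eqref{5.b'.limit} twice, and run a monotonicity/comparison contradiction: with $\phi=1/b=B'$, the function $G_\eta=\phi^2-\eta(B+1)$ is eventually decreasing since $\phi'\to0$ and $\phi>0$, and a nonnegative limit would give $B'\ge\sqrt{\eta B}$, forcing $B(t)\gtrsim t^2$ and contradicting $B(t)=o(t^2)$; every step here checks out (positivity of $B$ on $(0,\infty)$ justifies dividing by $\sqrt{B}$). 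Two observations on what each approach buys: your parenthetical crediting \eqref{notover} is in fact unnecessary---the differential inequality forces quadratic growth on its own---so your argument proves \eqref{5.b.limit} under \eqref{b.positive} and \eqref{5.b'.limit} alone, a slightly stronger statement than the lemma as stated; the paper's identity, on the other hand, is shorter and yields the quantitative bound $\limsup_{t\to\infty}\left[b(t)^2(B(t)+1)\right]^{-1}\le 2C(1+\tau)^{-\gamma}$ for every $\tau>1$, but it genuinely relies on $B(t)\to\infty$.
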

\begin{proof}
By \eqref{b'.limit}, there exist $t_0>0$ and $\delta>0$ such that for every $t\geq t_0$, 
\[
\left|\frac{d}{dt}b(t)^{-1}\right|\leq 1-\delta.
\]
This yields that for every $t\geq t_1=\max\{t_0,2[\delta b(t_0)]^{-1}\}$,
\[
b(t)^{-1}\leq b(t_0)^{-1}+(1-\delta)(t-t_0)\leq (1-\delta/2)t.
\]
Therefore
\[
\exp\left(-\int_{0}^tb(\sigma)\,d\sigma\right)
\leq 
\exp\left(-\int_{0}^{t_1}b(\sigma)\,d\sigma\right)\times 
t_1^{(1-\delta/2)^{-1}}
t^{-(1-\delta/2)^{-1}}.
\]
This implies \eqref{b.integrable} and also \eqref{b.limit}. 

On the other hand, assume \eqref{notover} and \eqref{5.b'.limit}. 
Taking $\tau>1$ arbitrary, we see from \eqref{5.b'.limit} that 
\begin{align*}
\widetilde{B}(t)=\frac{1}{b(t)^2 (B(t)+1)}
&=
\frac{b(\tau)^{-2}+2\int_\tau^tb(\sigma)^{-1}
\frac{d}{dt}[b(\sigma)^{-1}]\,d\sigma}{B(t)+1}
\\
&\leq 
\frac{b(\tau)^{-2}}{B(t)+1}
+
2C(1+\tau)^{-\gamma}\frac{B(t)}{B(t)+1}. 
\end{align*}
Using \eqref{notover}, we deduce $\limsup_{t\to\infty}\widetilde{B}(t)\leq 2C(1+\tau)^{-\gamma}$ 
and then \eqref{5.b.limit} is shown. 
\end{proof}

\subsection{Construction of solutions to the conjugate equation}

To find blowup phenomena, we will 
use the solution of the conjugate linear equation of \eqref{ndw}
\[
\pa_t^2\Phi(x,t)-\Delta \Phi(x,t)-\pa_t(b(t)\Phi(x,t))=0.
\]
In the current case, we can choose 
$\Phi(x,t)=\Phi(t)$ (independent of $x$). 
The equation is reduced to 
\[
\pa_t\Big(\pa_t\Phi(t)-b(t)\Phi(t)\Big)=0.
\]
The all solutions of the above equation are given by 
\[
\Phi_{c_0,c_1}(t)=
\exp\Big(\int_0^tb(\sigma)\,d\sigma\Big)
\left[
c_0+
c_1\int_{0}^t
\exp\Big(-\int_0^sb(\sigma)\,d\sigma\Big)\,ds
\right]
\]
with $c_0,c_1\in \R$. 
Then we fix the parameters $c_0,c_1\in \R$ (in the former case)
and collect the properties of 
$\Phi$, which we use later.
\begin{lemma}\label{harmonic}
Assume that \eqref{b.positive} and \eqref{b'.limit} are satisfied. 
Define for $t\geq 0$, 
\[
\Phi(t)=
\int_{t}^\infty
\exp\Big(-\int_t^sb(\sigma)\,d\sigma\Big)\,ds.
\]
Then $\Phi$ is well-defined and satisfies the following properties:
\begin{itemize}
\item[\bf (i)] $\Phi(0)=B_0$, and $\pa_t\Phi(t)-b(t)\Phi(t)=-1$ for every $t\geq 0$.
\item[\bf (ii)] There exist constants 
$t_0>0$, $B_1>0$ and $B_2>0$ such that 
for every $t\geq t_0$, 
\[
\frac{B_1}{b(t)}
\leq 
\Phi(t)
\leq 
\frac{B_2}{b(t)}
\]
\item[\bf (iii)] 
For every $t\geq t_0$, 
\[
|\pa_t\Phi(t)|
\leq 
\frac{1+b_0}{1-b_0}
\]
and in particular $\pa_t\Phi$ is bounded in $[0,\infty)$.
\end{itemize}
\end{lemma}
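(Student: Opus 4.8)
The plan is to exploit the representation obtained by pulling the lower limit of the inner integral back to $0$. Writing $\exp(-\int_t^s b)=\exp(\int_0^t b)\exp(-\int_0^s b)$ gives
\[
\Phi(t) = \exp\Big(\int_0^t b(\sigma)\,d\sigma\Big)\int_t^\infty \exp\Big(-\int_0^s b(\sigma)\,d\sigma\Big)\,ds.
\]
Since \eqref{b.integrable} asserts $\int_0^\infty \exp(-\int_0^s b)\,ds=B_0<\infty$, the tail integral is finite, so $\Phi(t)$ is well defined for every $t\ge 0$, and setting $t=0$ yields $\Phi(0)=B_0$. For the ODE in (i) I differentiate this representation: the derivative of the exponential prefactor produces $b(t)\Phi(t)$, while differentiating the tail integral produces $\exp(\int_0^t b)\cdot(-\exp(-\int_0^t b))=-1$, giving $\pa_t\Phi-b\Phi=-1$ directly.

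For (ii) the key device is integration by parts. Using $\exp(-\int_t^s b)=-b(s)^{-1}\frac{d}{ds}\exp(-\int_t^s b)$ and integrating by parts in $s$,
\[
\Phi(t) = \frac{1}{b(t)} + \int_t^\infty \Big(\frac{1}{b(s)}\Big)'\exp\Big(-\int_t^s b(\sigma)\,d\sigma\Big)\,ds,
\]
where the boundary term at $s=\infty$ vanishes because $b(s)^{-1}\exp(-\int_t^s b)=\exp(\int_0^t b)\,b(s)^{-1}\exp(-\int_0^s b)\to 0$ by \eqref{b.limit}, and the boundary term at $s=t$ contributes $1/b(t)$. Now $(1/b)'=-b'/b^2$, and by \eqref{b'.limit} I fix $\theta:=\frac{1+b_0}{2}\in(b_0,1)$ and choose $t_0$ so large that $|b'(s)|/b(s)^2\le\theta$ for all $s\ge t_0$. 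For $t\ge t_0$ the remaining integral is then bounded in absolute value by $\theta\int_t^\infty\exp(-\int_t^s b)\,ds=\theta\Phi(t)$, so that $|\Phi(t)-b(t)^{-1}|\le\theta\Phi(t)$. Rearranging yields $(1+\theta)^{-1}b(t)^{-1}\le\Phi(t)\le(1-\theta)^{-1}b(t)^{-1}$, which is (ii) with $B_1=(1+\theta)^{-1}$ and $B_2=(1-\theta)^{-1}$.

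Finally (iii) follows by combining (i) and (ii): since $\pa_t\Phi=b\Phi-1$ and $b(t)\Phi(t)\in[(1+\theta)^{-1},(1-\theta)^{-1}]$ for $t\ge t_0$, I obtain $\pa_t\Phi(t)\in[-\theta(1+\theta)^{-1},\theta(1-\theta)^{-1}]$, whence $|\pa_t\Phi(t)|\le\theta/(1-\theta)=\frac{1+b_0}{1-b_0}$ for the chosen $\theta$; boundedness on the compact interval $[0,t_0]$ is automatic from continuity of $\Phi$ and $b$.

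The step I expect to be most delicate is the integration by parts in (ii): one must justify the vanishing of the boundary term at infinity, for which the rescaled form of \eqref{b.limit} is exactly what is needed, and, more importantly, recognize that the self-similar structure $\int_t^\infty\exp(-\int_t^s b)\,ds=\Phi(t)$ lets the error integral be re-absorbed into $\theta\Phi(t)$. This is what converts the estimate into a closed two-sided bound on $\Phi$ rather than an open recursion, and it is the observation on which the clean constants $B_1,B_2$ and the bound $\frac{1+b_0}{1-b_0}$ ultimately rest.
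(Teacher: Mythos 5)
Your proof is correct and follows essentially the same route as the paper's: the explicit solution representation for well-definedness and (i), integration by parts with the boundary term killed by \eqref{b.limit} and the error re-absorbed as $\theta\Phi(t)$ using $\sup_{s\ge t_0}|b'(s)|/b(s)^2\le\frac{1+b_0}{2}$ for (ii), and then (iii) from $\pa_t\Phi=b\Phi-1$. Even your constants $B_1=\frac{2}{3+b_0}$, $B_2=\frac{2}{1-b_0}$ and the bound $\frac{1+b_0}{1-b_0}$ coincide with those in the paper.
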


\begin{proof}
First, in view of \eqref{b.integrable} in Lemma \ref{lem_b_properties}, we can choose
\[
c_0=B_0=\int_{0}^\infty \exp\Big(-\int_0^sb(\sigma)\,d\sigma\Big)\,ds>0, 
\quad
c_1=-1
\]
and then $\Phi_{c_0,c_1}$ is nothing but the function $\Phi$ in this lemma.
The assertion {\bf (i)} is clear by the construction of $\Phi_{c_0,c_1}$.
For {\bf (ii)}, by integration by parts and \eqref{b.limit} in Lemma \ref{lem_b_properties} we have 
\begin{align*}
\Phi(t)
&=
\int_{t}^\infty
\frac{b(s)}{b(s)}\exp\Big(-\int_t^sb(\sigma)\,d\sigma\Big)\,ds
\\
&=
\left[
-\frac{1}{b(s)}\exp\Big(-\int_t^sb(\sigma)\,d\sigma\Big)
\right]_{s=t}^{s=\infty}
-
\int_{t}^\infty
\frac{b'(s)}{b^2(s)}\exp\Big(-\int_t^sb(\sigma)\,d\sigma\Big)\,ds
\\
&=
\frac{1}{b(t)}
-
\int_{t}^\infty
\frac{b'(s)}{b^2(s)}\exp\Big(-\int_t^sb(\sigma)\,d\sigma\Big)\,ds.
\end{align*}
This implies that 
\begin{align*}
\left|
\Phi(t)-\frac{1}{b(t)}
\right|
&\leq 
\int_{t}^\infty
\frac{|b'(s)|}{b^2(s)}\exp\Big(-\int_t^sb(\sigma)\,d\sigma\Big)\,ds
\\
&\leq 
\sup_{s\geq t}\left(\frac{|b'(s)|}{b^2(s)}\right)
\int_{t}^\infty
\exp\Big(-\int_t^sb(\sigma)\,d\sigma\Big)\,ds
\\
&\leq 
\sup_{s\geq t}\left(\frac{|b'(s)|}{b^2(s)}\right)
\Phi(t).
\end{align*}
Since \eqref{b'.limit} gives that there exists $t_0>0$ 
such that $\sup_{s\geq t_0}\left(\frac{|b'(s)|}{b^2(s)}\right)\leq \frac{1+b_0}{2}<1$, we deduce 
\[
\frac{2}{3+b_0}\,\frac{1}{b(t)}\leq \Phi(t)\leq \frac{2}{1-b_0}\,\frac{1}{b(t)}.
\]
Moreover, noting that 
\[
|\pa_t\Phi(t)|=|b(t)\Phi(t)-1|
\le
\sup_{s\geq t}\left(\frac{|b'(s)|}{b^2(s)}\right)
b(t)\Phi(t)
\leq 
\frac{1+b_0}{1-b_0},
\] 
we have {\bf (iii)}.
\end{proof}
\subsection{Choice of cut-off functions}

The choice of the cut-off functions are based on that 
in Ikeda--Sobajima \cite{IkedaSobajima3}
with time rescaling.

Now we set two kinds of functions $\eta\in C^2([0,\infty))$ 
and $\eta^*\in L^\infty((0,\infty))$ 
as follows:
\[
\eta(s)
\begin{cases}
=1& \text{if}\ s\in [0, 1/2],
\\
\text{is decreasing}& \text{if}\ s\in (1/2,1),
\\
=0 & \text{if}\ s\notin [1,\infty), 
\end{cases}
\quad
\eta^*(s)
=
\begin{cases}
0& \text{if}\ s\in [0, 1/2),
\\
\eta(s)& \text{if}\ s\in [1/2,\infty).
\end{cases}
\]
\begin{definition}\label{psi}
For $p>1$, we define for $R>0$, 
\begin{align*}
\psi_R(x,t)
&=
[\eta(s_R(x,t))]^{2p'}, 
\quad 
(x,t)\in \R^N\times [0,\infty),
\\
\psi_R^*(x,t)
&=
[\eta^*(s_R(x,t))]^{2p'}, 
\quad 
(x,t)\in \R^N\times [0,\infty)
\end{align*}
with 
\[
s_R(x,t)=R^{-1}
\left(
1+|x|^{2}+
\int_{0}^t\Phi(\sigma)\,d\sigma\right).
\]
We also set
\[
P(R)=\left\{(x,t)\in \R^N\times [0,\infty)\;;\;
1+|x|^{2}+
\int_{0}^t\Phi(\sigma)\,d\sigma\leq R\right\}
\]
and $t_R>0$ as 
\[
1+\int_{0}^{t_R}\Phi(\sigma)\,d\sigma=R.
\]
\end{definition}

To deduce the upper bound for the solution to \eqref{ndw}, 
we need the following lemma 
which is essentially given by \cite{IkedaSobajima3}. 
We will only give a crucial idea of its proof. 
\begin{lemma}\label{key}
Let $\delta>0$, $C_0>0$, $R_1>0$, $\theta\geq 0$ 
and $0\leq w\in L^1_{\rm loc}([0,T);L^1(\R^N))$.
Assume that 
\[
\widetilde{R}(T):=1+\int_{0}^{T}\Phi(\sigma)\,d\sigma>R_1\]
and 
for every $R\in [R_1,\widetilde{R}(T))$, 
\begin{align}\label{criterion}
\delta 
+
\iint_{P(R)}
  w(x,t)\psi_R(x,t)
\,dx\,dt
\leq 
C_0R^{-\frac{\theta}{p'}}
\left(
\iint_{P(R)}
  w(x,t)\psi_R^*(x,t)
\,dx\,dt
\right)^{\frac{1}{p}}.
\end{align}
Then $T$ has an (implicit) upper bound as follows:
\begin{align*}
\widetilde{R}(T)
\leq 
\begin{cases}
\left(R_1^{(p-1)\theta}+(\log 2)C_0^p\theta \delta^{-(p-1)}\right)^{\frac{1}{(p-1)\theta}}
&\text{if}\ \theta>0,
\\[5pt]
\exp\left(\log R_1+(\log 2)(p-1)^{-1}C_0^p\delta^{-(p-1)}\right)
&\text{if}\ \theta=0.
\end{cases}
\end{align*}
\end{lemma}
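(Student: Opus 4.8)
The plan is to convert the family of integral inequalities \eqref{criterion} into a single first-order differential inequality for a primitive in the scaling parameter $R$, and then to integrate that inequality explicitly. Throughout I write $Y(R):=\iint_{P(R)}w\psi_R\,dx\,dt$ and $I(R):=\iint_{P(R)}w\psi_R^*\,dx\,dt$. Since $p/p'=p-1$, raising \eqref{criterion} to the $p$-th power immediately gives
\[
I(R)\geq C_0^{-p}\,R^{\theta(p-1)}\big(\delta+Y(R)\big)^p,\qquad R\in[R_1,\widetilde R(T)).
\]
I would deliberately keep the full term $\delta+Y(R)$ here rather than discard $\delta$; retaining it is precisely what forces the explicit constants in the conclusion and provides the positive starting value for the ODE below.

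The decisive step—and essentially the only delicate one—is a comparison of the ``annular'' quantity $I$, integrated in $R$, against the ``full'' quantity $Y$. Set
\[
J(R):=\int_{R_1}^R\frac{I(r)}{r}\,dr,\qquad\text{so that}\quad J'(R)=\frac{I(R)}{R},
\]
and, by Tonelli's theorem (every integrand is nonnegative), rewrite $J(R)=\iint w(x,t)\,g_R(x,t)\,dx\,dt$ with $g_R(x,t)=\int_{R_1}^R r^{-1}[\eta^*(s_r(x,t))]^{2p'}\,dr$. The key pointwise bound I would establish is $g_R\leq(\log2)\,[\eta(s_R)]^{2p'}$. Indeed, fixing $(x,t)$ and writing $\rho=1+|x|^2+\int_0^t\Phi(\sigma)\,d\sigma$, the factor $[\eta^*(\cdot)]^{2p'}$ is supported in $[1/2,1)$, so the $r$-integrand lives on $(\rho,2\rho]\cap[R_1,R]$; on this set $r\leq R$ forces $s_r=\rho/r\geq\rho/R=s_R$, and since $\eta$ is nonincreasing, $[\eta(s_r)]^{2p'}\leq[\eta(s_R)]^{2p'}$. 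Pulling this factor out and using $\int_{(\rho,2\rho]}r^{-1}\,dr=\log2$ gives the claim, and integrating against $w$ yields $J(R)\leq(\log2)\,Y(R)$, that is, $Y(R)\geq J(R)/\log2$. This is the crucial idea borrowed from \cite{IkedaSobajima3}.

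Combining the two displayed facts, I obtain a closed differential inequality for $H(R):=\delta+J(R)/\log2$. Using $\delta+Y(R)\geq H(R)$,
\[
H'(R)=\frac{J'(R)}{\log2}=\frac{I(R)}{R\log2}\geq\frac{1}{(\log2)C_0^p}\,R^{\theta(p-1)-1}\,H(R)^p,\qquad H(R_1)=\delta>0.
\]
The last step is a routine integration of this Bernoulli-type inequality. Dividing by $H^p$ and multiplying by $(1-p)<0$ gives $\frac{d}{dR}H^{1-p}\leq -\frac{p-1}{(\log2)C_0^p}\,R^{\theta(p-1)-1}$, and integrating over $[R_1,R]$ produces
\[
H(R)^{1-p}\leq\delta^{1-p}-\frac{p-1}{(\log2)C_0^p}\int_{R_1}^R r^{\theta(p-1)-1}\,dr.
\]
Because $H>0$, the right-hand side must remain positive for every $R<\widetilde R(T)$; evaluating the elementary $r$-integral (equal to $[\theta(p-1)]^{-1}(R^{\theta(p-1)}-R_1^{\theta(p-1)})$ when $\theta>0$, and to $\log(R/R_1)$ when $\theta=0$) and letting $R\uparrow\widetilde R(T)$ yields precisely the two stated bounds. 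The main obstacle is the pointwise estimate for $g_R$; once it is in hand the remainder is a deterministic ODE comparison.
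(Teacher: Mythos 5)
Your proof is correct and follows essentially the same route as the paper's (two-line) proof: both convert \eqref{criterion} into a Bernoulli-type differential inequality for the logarithmic primitive $\int_{}^{R} r^{-1}\iint_{P(r)}w\psi_r^*\,dx\,dt\,dr$, using exactly the support/monotonicity bound $\int_0^R[\eta^*(s_r)]^{2p'}\tfrac{dr}{r}\le(\log 2)\psi_R$, and then integrate the resulting ODE. If anything, your write-up is sharper than the paper's sketch: the paper's displayed inequality $\left(\delta+(\log 2)Y(R)\right)^p\leq C_0^pR^{1-\theta(p-1)}Y'(R)$ has the factor $\log 2$ on the wrong side (it should read $\delta+\tfrac{1}{\log 2}Y(R)$, which is your $H(R)$), and your normalization is the one that actually reproduces the constants stated in the lemma.
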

\begin{proof}
Set 
\[
y(r)=\iint_{P(r)}
  w(x,t)\psi_r^*(x,t)
\,dx\,dt, 
\quad 
Y(R)=\int_0^R\frac{y(r)}{r}\,dr. 
\]
Then by \eqref{criterion}, we can deduce 
\[
\left(\delta+(\log 2)Y(R)\right)^p
\leq 
C_0^pR^{1-\theta(p-1)}
Y'(R).
\]
This gives the desired upper bound for $R$ and also for $1+\int_{0}^{T}\Phi(\sigma)\,d\sigma$. 
\end{proof}
\begin{lemma}\label{lem_psi_R}
Let $\psi_R$ and $\psi_R^*$ be as in Definition \ref{psi}.
Then $\psi_R$ and $\psi_R^*$ satisfy the following properties:
\begin{itemize}
\item[\bf (i)] If $(x,t)\in P(R/2)$, then $\psi_R(x,t)=1$, 
and if $(x,t)\notin P(R)$, then $\psi_R(x,t)=0$.
\item[\bf (ii)] 
There exists a positive constant $C_1$ such that 
for every $(x,t)\in P(R)$, 
\begin{align*}
|\pa_t \psi_R(x,t)|\leq C_1R^{-1}\Phi(t)[\psi_R^*(x,t)]^{\frac{1}{p}}.
\end{align*}
\item[\bf (iii)] 
There exists a positive constant $C_2$ such that 
for every $(x,t)\in P(R)$, 
\begin{align*}
|\Delta \psi_R(x,t)|\leq C_2R^{-1}
[\psi_R^*(x,t)]^{\frac{1}{p}}.
\end{align*}
\item[\bf (iv)] Further assume that \eqref{notover}. Then 
there exists a positive constant $C_3$ such that 
for every $(x,t)\in P(R)$, 
\begin{align*}
|\pa_t^2 \psi_R(x,t)|
\leq 
C_3R^{-1}[\psi_R^*(x,t)]^{\frac{1}{p}}.
\end{align*}
\end{itemize}
\end{lemma}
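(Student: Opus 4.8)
The plan is to reduce every estimate to the chain rule applied to $\psi_R=h(s_R)$ with $h(s)=[\eta(s)]^{2p'}$, together with one algebraic fact about the exponent $2p'$. First I would record the derivatives of the argument: since $s_R(x,t)=R^{-1}(1+|x|^2+\int_0^t\Phi(\sigma)\,d\sigma)$, one has $\pa_t s_R=R^{-1}\Phi(t)$, $\pa_t^2 s_R=R^{-1}\Phi'(t)$, $\nabla_x s_R=2R^{-1}x$, $\Delta_x s_R=2NR^{-1}$, and $|\nabla_x s_R|^2=4R^{-2}|x|^2$. Part {\bf (i)} is then immediate: on $P(R/2)$ we have $s_R\le 1/2$, where $\eta\equiv 1$ and so $\psi_R=1$, while off $P(R)$ we have $s_R>1$, where $\eta\equiv 0$.

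The core observation is that all derivatives of $h$ are supported in $[1/2,1]$, which is exactly the set where $\eta=\eta^*$, and that there they are dominated by $[\eta^*]^{2p'/p}=[\psi_R^*]^{1/p}$. Indeed $h'(s)=2p'[\eta(s)]^{2p'-1}\eta'(s)$ and $h''(s)=2p'(2p'-1)[\eta(s)]^{2p'-2}(\eta'(s))^2+2p'[\eta(s)]^{2p'-1}\eta''(s)$; since $\eta\in C^2$, the factors $\eta',\eta''$ are bounded and vanish outside $(1/2,1)$, on which $\eta=\eta^*$ and $0\le\eta^*\le 1$. The decisive identity is $2p'-2=\tfrac{2p'}{p}$ (equivalently $p'(p-1)=p$), together with $2p'-1>2p'-2=\tfrac{2p'}{p}$; because $0\le\eta^*\le 1$, raising to a larger exponent only decreases the value, so $|h'(s_R)|\le C[\psi_R^*]^{1/p}$ and $|h''(s_R)|\le C[\psi_R^*]^{1/p}$. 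This is precisely why the power $2p'$ is used: the worst contribution, from $(\eta')^2$, lands exactly on the exponent $2p'/p$.

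Granting this, {\bf (ii)} and {\bf (iii)} are direct substitutions. For {\bf (ii)} I write $\pa_t\psi_R=h'(s_R)R^{-1}\Phi(t)$ and insert the bound on $|h'|$. For {\bf (iii)}, $\Delta\psi_R=h''(s_R)\,4R^{-2}|x|^2+h'(s_R)\,2NR^{-1}$, and on $P(R)$ one has $|x|^2\le R$, hence $4R^{-2}|x|^2\le 4R^{-1}$; inserting the bounds on $|h'|$ and $|h''|$ yields the claim.

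The one genuine obstacle is the factor $\Phi(t)^2$ appearing in {\bf (iv)}, where $\pa_t^2\psi_R=h''(s_R)R^{-2}\Phi(t)^2+h'(s_R)R^{-1}\Phi'(t)$. The second term is harmless since $\Phi'$ is bounded by Lemma \ref{harmonic}{\bf (iii)}. For the first I must show $R^{-2}\Phi(t)^2\le CR^{-1}$ on the support of $\psi_R^*$, i.e.\ $\Phi(t)^2\le CR$. The plan is to exploit $\Phi\ge 0$ with $|\Phi'|\le M$ bounded: then $\frac{d}{dt}\Phi^2=2\Phi\Phi'\le 2M\Phi$, so integrating gives $\Phi(t)^2\le\Phi(0)^2+2M\int_0^t\Phi(\sigma)\,d\sigma\le C\big(1+\int_0^t\Phi(\sigma)\,d\sigma\big)$ with $C$ independent of $R$ and $(x,t)$. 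On the support of $\psi_R^*$ we have $s_R\le 1$, i.e.\ $1+\int_0^t\Phi(\sigma)\,d\sigma\le R$, and therefore $\Phi(t)^2\le CR$ uniformly, which finishes {\bf (iv)}. Here \eqref{notover} is invoked to guarantee $\int_0^t\Phi(\sigma)\,d\sigma\to\infty$, so that the time-rescaled region $P(R)$ genuinely sweeps out large times as $R\to\infty$; this is what makes the time-derivative estimate relevant to the lifespan bound.
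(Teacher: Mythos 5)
Your proposal is correct and takes essentially the same route as the paper's proof: the chain-rule expansion of $\psi_R=[\eta(s_R)]^{2p'}$, the exponent identity $2p'-2=2p'/p$ (so that $[\eta^*]^{2p'-2}=[\psi_R^*]^{1/p}$ absorbs all derivative terms), the bound $|x|^2\le R$ on $P(R)$ for {\bf (iii)}, and, for {\bf (iv)}, exactly the paper's key inequality $\Phi(t)^2\le B_0^2+2\|\Phi'\|_{L^\infty}\int_0^t\Phi(\sigma)\,d\sigma\le C\bigl(1+\int_0^t\Phi(\sigma)\,d\sigma\bigr)\le CR$, which rests on the boundedness of $\Phi'$ from Lemma \ref{harmonic} {\bf (iii)}. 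Your side remark on the role of \eqref{notover} (ensuring $\int_0^t\Phi\to\infty$, hence that $t_R$ and the regions $P(R)$ are well defined for all $R$) is a reasonable gloss; like the paper's proof, your pointwise estimate in {\bf (iv)} does not actually use it.
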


\begin{proof}
The assertion {\bf (i)} is trivial by the definition. 
On the other hand, 
{\bf (ii)} and {\bf (iii)} follow from standard calculations:
\begin{align*}
|\pa_t\psi_R|
&=
2p'
[\eta^*(s_R)]^{2p'-1}|\eta'(s_R)\pa_ts_R|
\\
&\leq 
2p'\|\eta\eta'\|_{L^\infty}
[\eta^*(s_R)]^{2p'-2}\frac{\Phi(t)}{R}
\\
&\leq 
\frac{2p'\|\eta\eta'\|_{L^\infty}}{R}
\Phi(t)[\psi_R^*]^{\frac{1}{p}}
\end{align*}
and 
\begin{align*}
|\Delta \psi_R|
&=
2p'\left|
(2p'-1)\eta'(s_R)^2|\nabla s_R|^2
+\eta(s_R)\eta''(s_R)|\nabla s_R|^2
+\eta(s_R)\eta'(s_R)\Delta s_R
\right|[\psi_R^*]^{\frac{1}{p}}
\\
&\leq 
2p'\left(
\frac{4(2p'-1)\|\eta'\|_{L^\infty}^2+4\|\eta\eta''\|_{L^\infty}}{R^2}|x|^2
+\frac{2N\|\eta\eta'\|_{L^\infty}^2}{R}
\right)[\psi_R^*]^{\frac{1}{p}}
\end{align*}
with $|x|^2\leq R$ on ${\rm supp}\,\psi_R$. For {\bf (iv)}, 
we see that 
\begin{align*}
|\pa_t^2\psi_R|
&=
2p'
\left|
(2p'-1)
(\eta'(s_R)\pa_ts_R)^2
+
\eta(s_R)\eta''(s_R)(\pa_ts_R)^2
+
\eta(s_R)\eta'(s_R)\pa_t^2s_R
\right|
[\psi_R^*]^{\frac{1}{p}}
\\
&\leq 
2p'
\left(
(2p'-1)
\|\eta'\|_{L^\infty}^2
+
\|\eta\eta''\|_{L^\infty}
\right)
\frac{\Phi(t)^2}{R^2}[\psi_R^*]^{\frac{1}{p}}
+
2p'\|\eta\eta'\|_{L^\infty}\frac{\Phi'(t)}{R}
[\psi_R^*]^{\frac{1}{p}}.
\end{align*}
Here using $1+ \int_{0}^t\Phi(\sigma)\,d\sigma\leq R$ and 
Lemma \ref{harmonic} {\bf (iii)}, we have 
\begin{align*}
\frac{\Phi(t)^2}{R}
\leq
\frac{B_0^2+2\int_{0}^t\Phi(\sigma)\Phi'(\sigma)\,d\sigma}
{1+\int_{0}^t\Phi(\sigma)\,d\sigma}
\leq
\frac{B_0^2+2\|\Phi'\|_{L^\infty}\int_{0}^t\Phi(\sigma)\,d\sigma}
{1+ \int_{0}^t\Phi(\sigma)\,d\sigma}
\leq
\max\{B_0^2, 2\|\Phi'\|_{L^\infty}\}.
\end{align*}
Hence we obtain {\bf (iii)}. 
\end{proof}

\section{Upper bound of lifespan}
In this section we prove Theorem \ref{main}. 
\begin{proof}[Proof of Theorem \ref{main}]
Let $u$ be a solution to \eqref{ndw} in $[0,T)$ with $T=\lifespan(u)$. 
We assume $T> t_{R_0}$ with large $R_0$ determined later 
(otherwise the assertion is obvious).  
Multiplying the equation in \eqref{ndw} to $\Phi(t)\psi_R(x,t)$ 
and using integration by parts, we have 
\begin{align}
\nonumber
\int_{\R^N}|u|^p\Phi(t)\psi_R\,dx
&=
\int_{\R^N}\Big(\pa_t^2u-\Delta u + b(t)\pa_t u\Big)\Phi(t)\psi_R\,dx
\\
\nonumber
&=
\frac{d}{dt}
\left(
\int_{\R^N}\pa_tu\Phi(t)\psi_R-u\pa_t(\Phi(t)\psi_R)
+
b(t)u\Phi(t)\psi_R
\,dx
\right)
\\
\label{eq:1st}
&\quad 
+\int_{\R^N}
   u\Big(\pa_t^2(\Phi(t)\psi_R)
   -\Delta (\Phi(t)\psi_R)
   -\pa_t(b(t)\Phi(t)\psi_R)
   \Big)
\,dx.
\end{align}
It follows from Lemmas \ref{harmonic} and \ref{lem_psi_R} that
\begin{align*}
&|
\pa_t^2(\Phi(t)\psi_R)
   -\Delta (\Phi(t)\psi_R)
   -\pa_t(b(t)\Phi(t)\psi_R)
|
\\
&\leq 
2 |\Phi'(t)\pa_t\psi_R|
+
\Phi(t)|\pa_t^2\psi_R|
+
\Phi(t)|\Delta\psi_R|
+
b(t)\Phi(t)|\pa_t\psi_R|
\\
&\leq 
\frac{2C_1}{R} |\Phi'(t)|\Phi(t)[\psi_R^*]^{\frac{1}{p}}
+
\frac{C_3}{R} \Phi(t)[\psi_R^*]^{\frac{1}{p}}
+
\frac{C_2}{R}\Phi(t)[\psi_R^*]^{\frac{1}{p}}
+
\frac{B_2C_1}{R}\Phi(t)[\psi_R^*]^{\frac{1}{p}}
\\
&\leq 
\frac{C_4}{R}
\Phi(t)[\psi_R^*]^{\frac{1}{p}}
\end{align*}
with $C_4=2C_1\|\Phi'\|_{L^{\infty}}
+
C_3
+
C_2
+
B_2C_1$. 
Therefore integrating \eqref{eq:1st} on $[0,t_R]$ 
and using the above estimate, we have 
\begin{align}
\label{eq:2nd}
j(R)\ep 
+ 
\iint_{P(R)}|u|^p\Phi(t)\psi_R\,dx\,dt
&\leq 
\frac{C_4}{R}
\int_{\R^N}
   |u|\Phi(t)[\psi_R^*]^{\frac{1}{p}}
   \,dx,
\end{align}
where 
\[
j(R)=
\int_{\R^N}
(f(x)+B_0g(x))\psi_R(x,0)
\,dx
-
B_0
\int_{\R^N}
f(x)\pa_t\psi_R(x,0)
\,dx.
\]
Noting that 
\[
\|\pa_t\psi_R(\cdot,0)\|_{L^\infty}=
2p'\sup_{x\in\R^N}
\Big(
[\eta^*(s_R(x,0))]^{2p'-1}\eta'(s_R(x,0))\frac{\Phi(0)}{R}\Big)
\leq \frac{2p'\|\eta\|_{L^\infty}\Phi(0)}{R}
\to 0
\]
as $R\to\infty$, we see from the dominated convergence theorem that 
there exists $R_0>0$ such that for every $R\geq R_0$, 
\[
j(R)\geq c_0=
\frac{1}{2}\int_{\R^N}
(f(x)+B_0g(x))
\,dx>0.
\]
Therefore by \eqref{eq:2nd} with the H\"older inequality, 
we have 
\begin{align*}
c_0\ep + 
\iint_{P(R)}|u|^p\Phi(t)\psi_R\,dx\,dt
\leq 
\frac{C_4}{R}
\left(
\iint_{P(R)}\Phi(t)dx\,dt
\right)^\frac{1}{p'}
\left(
\iint_{P(R)}|u|^p\Phi(t)\psi_R^*\,dx\,dt
\right)^\frac{1}{p}.
\end{align*}
Since 
\[
\iint_{P(R)}\Phi(t)dx\,dt
\leq 
\int_0^{t_R}\int_{B(0,\sqrt{R})}\Phi(t)\,dx\,dt
=
|S^{N-1}|R^{1+\frac{N}{2}}, 
\]
The last inequality gives 
\begin{align*}
c_0\ep + 
\iint_{P(R)}|u|^p\Phi(t)\psi_R\,dx\,dt
\leq 
C_5R^{-(\frac{1}{p-1}-\frac{N}{2})\frac{1}{p'}}
\left(
\iint_{P(R)}|u|^p\Phi(t)\psi_R^*\,dx\,dt
\right)^\frac{1}{p}.
\end{align*}
Applying Lemma \ref{key} with $w(x,t)=|u(x,t)|^p\Phi(t)$, we deduce
\[
1+\int_{0}^{T}\Phi(\sigma)\,d\sigma
\leq 
\begin{cases}
C\ep^{-(\frac{1}{p-1}-\frac{N}{2})^{-1}}
&\text{if }1<p<1+\frac{2}{N},
\\
\exp(C\ep^{-(p-1)})
&\text{if }p=1+\frac{2}{N}.
\end{cases}
\]
By Lemma \ref{harmonic} {\bf (ii)}, we obtain
\[
B(T)=\int_{0}^{T}\frac{1}{b(\sigma)}\,d\sigma
\leq 
\begin{cases}
C'\ep^{-(\frac{1}{p-1}-\frac{N}{2})^{-1}}
&\text{if }1<p<1+\frac{2}{N},
\\
\exp(C'\ep^{-(p-1)})
&\text{if }p=1+\frac{2}{N}.
\end{cases}
\]
The proof is complete.
\end{proof}

\section{Lower bound of lifespan}
In this section, we discuss the lower bound of lifespan for \eqref{ndw}.
Since the following proof is the almost same as those of
\cite{Wakasugi17, FujiwaraIkedaWakasugi},
we give only the outline of the proof of Theorem \ref{dw.below}.

In what follows, for simplicity, we denote by
$u$
the solution of \eqref{ndw} instead of
$u_{\varepsilon}$.
We first apply the changing variables
\begin{align}%
\label{5.scalingvariables}
	y= (B(t)+1)^{-1/2} x, \quad
	s= \log (B(t) + 1)
\end{align}%
to the equation \eqref{ndw}.
Conversely, we also have
$t= t(s) = B^{-1}(e^s-1)$.
If we introduce a new unknown function
$(v,w)$
by the relation
\begin{align*}%
	u(x,t) &= (B(t)+1)^{-N/2} v ((B(t)+1)^{-1/2}x,\log (B(t)+1)),\\
	u_t(x,t) &= b(t)^{-1} (B(t)+1)^{-N/2-1} w ((B(t)+1)^{-1/2}x,\log (B(t)+1)),
\end{align*}%
then we have the first order system
\begin{align}%
\label{5.eq.vw}
	\left\{ \begin{array}{ll}
	\displaystyle
		v_s - \frac{y}{2}\cdot \nabla_y v - \frac{N}{2} v = w,
			&(y,s) \in\mathbb{R}^N \times (0,S),\\[8pt]
	\displaystyle
		      \frac{e^{-s}}{b(t(s))^2}
		\left( w_s - \frac{y}{2}\cdot \nabla_y w - \left( \frac{N}{2} + 1 \right) w \right) + w
			= \Delta_y v + \frac{b'(t(s))}{b(t(s))^2} w
				+ e^{\frac{N}{2}\left( 1+\frac{2}{N} - p \right)s} |v|^p,
			&(y,s) \in\mathbb{R}^N \times (0,S),\\
	\displaystyle
		v(y,0) = v_0(y) = \varepsilon f(y),\quad
		w(y,0) = w_0(y) = \varepsilon g(y),
			&y\in \mathbb{R}^N.
	\end{array}\right.
\end{align}%
We first recall the local existence result.
\begin{proposition}[\cite{Wakasugi17}]\label{5.prop.le}
Under the assumptions
\eqref{b.positive}, \eqref{notover}, \eqref{5.b.limit}, and \eqref{5.b'.limit},
there exists
$S>0$
depending only on the norm
$\| (v_0, w_0) \|_{H^{2,m}\times H^{1,m}}$
such that the Cauchy problem \eqref{5.eq.vw} admits a unique strong solution
$(v,w)$
satisfying
\begin{align}%
\label{5.vw.reg}
	(v,w) \in C([0,S) ; H^{2,m}(\mathbb{R}^N) \times H^{1,m}(\mathbb{R}^N))
		\cap C^1([0,S) ; H^{1,m}(\mathbb{R}^N) \times H^{0,m}(\mathbb{R}^N)).
\end{align}%
Moreover, we have the almost global existence for small data, namely,
for arbitrary fixed time
$S'>0$,
by taking
$\varepsilon$
sufficiently small, we can extend the solution to the interval
$[0,S']$
with the estimate
\begin{align}%
\label{5.s'}
	\| (v, w)(S') \|_{H^{1,m} \times H^{0,m}}
		\le C \varepsilon \| (v_0, w_0) \|_{H^{1,m} \times H^{0,m}}.
\end{align}%
Finally, we have the blow-up alternative, namely, if
\begin{align*}%
	\lifespan(v,w) = \sup \left\{ S \in (0,\infty) ;
		\mbox{there exists a unique strong solution $(v,w)$ to \eqref{5.eq.vw} in $(0,S)$}  \right\}
\end{align*}%
is finite, then
$\lim_{s \to \lifespan(v,w)} \| (v, w)(s) \|_{H^{1,m}\times H^{0,m}} = \infty$
holds.
\end{proposition}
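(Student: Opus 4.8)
The plan is to construct the local solution by a contraction-mapping/energy-method argument in the weighted space appearing in \eqref{5.vw.reg}, then to upgrade to the almost-global bound \eqref{5.s'} through a smallness-driven continuation argument, and finally to read off the blow-up alternative from persistence of regularity. The first observation is that on a bounded interval $s\in[0,S]$ the coefficient $\frac{e^{-s}}{b(t(s))^2}$ in \eqref{5.eq.vw} is bounded away from both $0$ and $\infty$, since $b$ is continuous and positive and $t(s)$ is bounded there; the degeneracy guaranteed by \eqref{5.b.limit} only becomes relevant on long $s$-intervals. Hence, locally, the second equation is a genuine non-degenerate first-order transport equation for $w$. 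The drift operators $\frac{y}{2}\cdot\nabla_y+\frac{N}{2}$ and $\frac{y}{2}\cdot\nabla_y+(\frac{N}{2}+1)$ are the generators naturally attached to the self-similar variables \eqref{5.scalingvariables} (the drift part of the Fokker--Planck operator in Gallay--Raugel), and they are well adapted to the weighted spaces $H^{s,m}$: pairing $\frac{y}{2}\cdot\nabla_y v$ with the weighted test function $(1+|y|)^{2m}v$ and integrating by parts produces only a controlled multiple of $\|(1+|y|)^m v\|_{L^2}^2$, so no derivative is lost.

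I would then close the linear a priori estimates by energy methods. Freezing the nonlinear source $e^{\frac{N}{2}(1+\frac{2}{N}-p)s}|\tilde v|^p$ for a given $\tilde v$, I pair the first equation with the weighted $v$ and the second with the weighted $w$. The zeroth-order term $+w$ on the left of the second equation is coercive and yields a positive $\|w\|^2$ contribution, which controls the coupling term $w$ entering the first equation; the transport terms and the constants $\frac{N}{2},\frac{N}{2}+1$ contribute only bounded-in-$s$ multiples of the energy on $[0,S]$. Differentiating in $y$ (twice for $v$, once for $w$) and commuting the derivatives with both the weight $(1+|y|)^m$ and the drift $\frac{y}{2}\cdot\nabla_y$---the commutators being lower order---gives a closed Gronwall inequality for the $H^{2,m}\times H^{1,m}$ energy, hence existence on an interval whose length depends only on $\|(v_0,w_0)\|_{H^{2,m}\times H^{1,m}}$.

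For the nonlinearity, the exponential prefactor $e^{\frac{N}{2}(1+\frac{2}{N}-p)s}$ is bounded by $e^{CS}$ on $[0,S]$ and so only affects constants. In the range \eqref{5.p} the relevant Sobolev embeddings hold ($2p\le 2^\ast$ when $N\ge 3$, and no restriction for $N=1,2$), and a Moser/Gagliardo--Nirenberg product estimate in weighted spaces then shows that $v\mapsto|v|^p$ is locally Lipschitz from $H^{2,m}$ into $H^{1,m}$. Combining this with the linear estimate, I would verify that for $S$ small---depending only on the initial norm---the map $\tilde v\mapsto v$ sends a ball of the space in \eqref{5.vw.reg} into itself and is a contraction, so Banach's fixed point theorem yields the unique strong solution. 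For the almost-global statement \eqref{5.s'}, I would run a continuation bootstrap at the level of the lower norm $H^{1,m}\times H^{0,m}$: the linearization about zero has a flow bounded on any fixed $[0,S']$, while the nonlinear contribution is of order $\varepsilon^p$ against data of order $\varepsilon$, so the bootstrap closes once $\varepsilon$ is small depending on $S'$. The blow-up alternative then follows from persistence of regularity: as long as the low norm stays bounded on $[0,S^\ast)$, a Gronwall argument bounds the high norm on the same interval, so the local result can be reapplied to continue the solution; hence a finite lifespan forces $\|(v,w)(s)\|_{H^{1,m}\times H^{0,m}}\to\infty$.

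The main obstacle I anticipate is the interaction between the degenerating coefficient $\frac{e^{-s}}{b(t(s))^2}$ and the weighted transport structure. For the local statement alone this is harmless, since the coefficient is non-degenerate on bounded intervals; the difficulty is concentrated in obtaining \eqref{5.s'} and in the subsequent lifespan analysis, where one must track how the coercive $+w$ term compensates the vanishing of the coefficient as $s\to\infty$, using precisely \eqref{5.b.limit} and \eqref{5.b'.limit}. Keeping the commutators among $\frac{y}{2}\cdot\nabla_y$, the weight $(1+|y|)^m$, and the $y$-derivatives under control in the weighted norms, so that the energy inequality closes with no loss of derivatives, is the technical heart of the argument.
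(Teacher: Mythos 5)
Your overall architecture (weighted energy estimates, a fixed point for local existence, a smallness bootstrap for \eqref{5.s'}, persistence of regularity for the blow-up alternative) is sound, and several of your observations --- the non-degeneracy of $e^{-s}b(t(s))^{-2}$ on compact $s$-intervals, the good behaviour of the drift $\frac{y}{2}\cdot\nabla_y$ against the weight $(1+|y|)^m$ --- are exactly the right ones. But there is one genuine gap, and it sits at the heart of your contraction argument: the claim that $v\mapsto|v|^p$ is locally Lipschitz from $H^{2,m}$ into $H^{1,m}$ is false in the range allowed by \eqref{5.p}. Measuring the difference in $H^{1,m}$ forces you to estimate $\nabla(|v_1|^p)-\nabla(|v_2|^p)$, which contains the factor $|v_1|^{p-2}v_1-|v_2|^{p-2}v_2$; since $\tau\mapsto|\tau|^{p-2}\tau$ is only H\"older continuous of exponent $p-1$ near $\tau=0$ when $p<2$, this factor is only $O(|v_1-v_2|^{p-1})$, not $O(|v_1-v_2|)$. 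Concretely, in one dimension take $v_1(y)=y\chi(y)$ (with $\chi$ a cutoff equal to one near the origin) and $v_2=v_1+\epsilon\chi$: then $\|\nabla(|v_1|^p)-\nabla(|v_2|^p)\|_{L^2}\gtrsim\epsilon^{p-\frac12}$ while $\|v_1-v_2\|_{H^2}\sim\epsilon$, so the Lipschitz quotient blows up like $\epsilon^{p-\frac32}$ for $1<p<\frac32$ --- and \eqref{5.p} admits all such $p$ in every dimension. No Moser or Gagliardo--Nirenberg product estimate can repair this, because the obstruction is the non-Lipschitz character of the scalar map $\tau \mapsto |\tau|^{p-2}\tau$, not a failure of embeddings. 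Consequently the Banach fixed point in the $H^{2,m}\times H^{1,m}$ metric, as you set it up, does not close.

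This is precisely why the paper (following \cite[Propositions 3.5, 3.6]{Wakasugi17}) organizes the proof the other way round: the contraction is performed at the low regularity level $H^{1,m}\times H^{0,m}$, where one only needs the elementary bound $\bigl||a|^p-|b|^p\bigr|\le p\max(|a|,|b|)^{p-1}|a-b|$ --- under \eqref{5.p} and $m>N/2$ this makes $v\mapsto|v|^p$ genuinely locally Lipschitz into the weighted $L^2$ space --- and the regularity \eqref{5.vw.reg} is then recovered afterwards by persistence of regularity, i.e.\ a linear Gronwall inequality for the high norm whose coefficients involve only low-order quantities (this never requires Lipschitz continuity of the nonlinearity at the high level, only the chain-rule bound $|\nabla(|v|^p)|\le p|v|^{p-1}|\nabla v|$). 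Your argument can be repaired either by adopting this order, or by running your contraction on a ball of $H^{2,m}\times H^{1,m}$ equipped with the weaker $H^{1,m}\times H^{0,m}$ metric (Kato's trick: the ball is complete in the weak metric, and the fixed point inherits the high regularity). With that correction, your bootstrap for \eqref{5.s'} and your persistence-based derivation of the blow-up alternative in the low norm go through essentially as you describe; note that the low-norm formulation of the alternative then comes out naturally, since the local existence time is controlled by the low norm alone.
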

For the proof of this proposition, we first prepare the local theory
for the initial data in
$H^{1,m}(\mathbb{R}^N) \times H^{0,m}(\mathbb{R}^N)$
and then, we have the regularity of the solution \eqref{5.vw.reg}
by the property of persistence regularity.
For the detail, see \cite[Proposition 3.6]{Wakasugi17}.

\subsection{A priori estimate and the proof of Theorem 1.2}
Our first goal is to obtain the following a priori estimate for the first order energy.
For a constant
$s_0 \ge 0$,
we let for
$s \ge s_0$,
\begin{align}%
\label{5.energy.M}
	M(s) = \sup_{s_0 \le \sigma \le s}
		\left( \| v(\sigma) \|_{H^{1,m}}^2
			+ \frac{e^{-\sigma}}{b(t(\sigma))^2} \| w(\sigma) \|_{H^{0,m}}^2 \right).
\end{align}%
\begin{proposition}\label{5.apriori}
Under the assumptions
\eqref{b.positive}, \eqref{notover}, \eqref{5.b.limit}, \eqref{5.b'.limit}, and \eqref{5.ini}--\eqref{5.p}
there exist constants
$s_0 \ge 0$
and
$C>0$
such that for any
$s \ge s_0$
and for a solution
$(v,w)$
to \eqref{5.eq.vw} on the interval
$[0, s]$,
we have the a priori estimate
\begin{align}%
\label{5.M.apriori}
	M(s) \le C M(s_0) + C
	\begin{cases}
		\displaystyle
			e^{N(1+2/N-p)s}M(s)^p + e^{\frac{N}{2}\left(1+2/N-p\right)s} M(s)^{\frac{p+1}{2}}
				&\mbox{if} \ 1<p<1+\frac{2}{N},\\
		\displaystyle
			s \left( M(s)^p + M(s)^{\frac{p+1}{2}} \right)
				&\mbox{if} \ p = 1+\frac{2}{N},\\
		\displaystyle
			M(s)^p + M(s)^{\frac{p+1}{2}}
				&\mbox{if} \ p> 1+\frac{2}{N}.
	\end{cases}
\end{align}%
\end{proposition}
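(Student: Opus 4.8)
The plan is to derive the a priori estimate \eqref{5.M.apriori} by performing weighted energy estimates directly on the system \eqref{5.eq.vw}, treating the linear part (which is essentially the generator of the Ornstein--Uhlenbeck / scaling-variable semigroup) as producing good dissipation, and controlling the nonlinear forcing term $e^{\frac{N}{2}(1+2/N-p)s}|v|^p$ by Sobolev embedding. The natural energy is exactly the one appearing in \eqref{5.energy.M}: the $H^{1,m}$-norm of $v$ together with the degenerate-weighted $H^{0,m}$-norm of $w$. I would first multiply the $v$-equation and $w$-equation by suitable weighted test functions (namely $(1+|y|)^{2m}v$, $(1+|y|)^{2m}\Delta_y v$ for the $H^{1,m}$ piece, and $\frac{e^{-s}}{b(t(s))^2}(1+|y|)^{2m}w$ for the $w$ piece) and integrate over $\mathbb{R}^N$.

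\textbf{Key steps.} First I would compute the energy identity for the instantaneous energy
\[
E(s) = \| v(s) \|_{H^{1,m}}^2 + \frac{e^{-s}}{b(t(s))^2}\| w(s) \|_{H^{0,m}}^2
\]
by differentiating in $s$ and substituting the equations. The transport terms $-\frac{y}{2}\cdot\nabla_y v$ and $-\frac{y}{2}\cdot\nabla_y w$, after integration by parts against the weight $(1+|y|)^{2m}$, produce zeroth-order terms whose sign must be tracked; together with the $-\frac{N}{2}v$ and $-(\frac{N}{2}+1)w$ terms they supply the dissipation that makes the linear flow contractive for $s \ge s_0$ large. The coefficient $\frac{b'(t(s))}{b(t(s))^2}w$ on the right is handled by \eqref{5.b'.limit}, which forces it to be small, and the degeneracy-controlling factor $\frac{e^{-s}}{b(t(s))^2}$ is shown to vanish as $s\to\infty$ by \eqref{5.b.limit}; both facts are what let me absorb the dangerous terms into the good part of the energy once $s_0$ is chosen large. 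Second, I would estimate the nonlinear contribution: pairing $e^{\frac{N}{2}(1+2/N-p)s}|v|^p$ against the test functions and using the Gagliardo--Nirenberg/Sobolev embedding valid under \eqref{5.p} gives $\||v|^p\|$ bounded by $\|v\|_{H^{1,m}}^p \le M(s)^{p/2}\cdot M(s)^{1/2}$-type products, yielding the $M(s)^p$ and $M(s)^{(p+1)/2}$ terms after Young's inequality splits the cross term $\langle |v|^p, w\rangle$. Third, integrating the differential inequality for $E(s)$ in $\sigma$ over $[s_0,s]$ and taking the supremum produces $M(s)$ on the left; the time factor $e^{\frac{N}{2}(1+2/N-p)s}$ integrates to a bounded constant when $p<1+\frac{2}{N}$, to a factor $s$ at criticality $p=1+\frac{2}{N}$, and to a constant in the supercritical case, which is precisely the three-way split displayed in \eqref{5.M.apriori}.

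\textbf{Main obstacle.} The hard part will be handling the degeneracy of the second equation: the factor $\frac{e^{-s}}{b(t(s))^2}$ multiplying the $w_s$-term means the system is not a standard hyperbolic or parabolic energy problem, and the energy for $w$ is weighted by this same degenerate factor. I must check that when differentiating $\frac{e^{-s}}{b(t(s))^2}\|w\|_{H^{0,m}}^2$ in $s$, the term coming from differentiating the coefficient $\frac{e^{-s}}{b(t(s))^2}$ itself has a favorable sign or is dominated; this is where \eqref{5.b.limit} together with $B(\lifespan)=\infty$-type control from \eqref{notover} becomes essential, and it is the step most likely to require care rather than routine computation. A secondary technical point is keeping the weight $(1+|y|)^{2m}$ under control when integrating the transport terms by parts, since $\nabla_y\big[(1+|y|)^{2m}\big]$ generates lower-order commutator terms; but these are absorbable by the condition \eqref{5.m} on $m$ and do not change the structure of the estimate. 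Throughout I would rely on $(v,w)$ having the regularity \eqref{5.vw.reg} guaranteed by Proposition \ref{5.prop.le}, so that all integrations by parts are justified.
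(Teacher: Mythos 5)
Your plan breaks down at its first key step: the claim that the transport terms $-\frac{y}{2}\cdot\nabla_y v-\frac{N}{2}v$ ``supply the dissipation that makes the linear flow contractive'' has the wrong sign, and this is precisely the central difficulty the paper's proof is built to circumvent. Testing the first equation of \eqref{5.eq.vw} against $v$ gives
\begin{equation*}
\int_{\R^N}\Bigl(\frac{y}{2}\cdot\nabla_y v+\frac{N}{2}v\Bigr)v\,dy
=-\frac{N}{4}\|v\|_{L^2}^2+\frac{N}{2}\|v\|_{L^2}^2
=+\frac{N}{4}\|v\|_{L^2}^2,
\end{equation*}
i.e.\ the scaling terms are \emph{anti}-dissipative. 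The only genuine dissipation is $-\|\nabla_y v\|_{L^2}^2$ (through the coupling $w\approx\Delta_y v$) and $-\|w\|_{L^2}^2$, and on $\R^N$ there is no Poincar\'e inequality converting $-\|\nabla_y v\|^2$ into $-\|v\|^2$; the weight $(1+|y|)^{2m}$ helps only for large $|y|$ (that is what $m>N/2$ buys), not on compact sets. This is not a technicality: the mass $\int_{\R^N} v\,dy$ is conserved by the linearized flow, the Gaussian $\varphi_0$ is a neutral mode (indeed $\|\nabla\varphi_0\|_{L^2}^2=\frac{N}{4}\|\varphi_0\|_{L^2}^2$, so growth and dissipation exactly cancel on it), and consequently no quadratic energy of $(v,w)$ alone can satisfy a differential inequality whose Gronwall output is $M(s)\le CM(s_0)+\ldots$; a naive estimate yields $E'\lesssim E+\ldots$ with a constant (non-integrable) coefficient, hence exponential growth in $s$, which is useless.

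The paper removes the neutral mode before estimating: it decomposes $v=\alpha(s)\varphi_0+f$, $w=\frac{d\alpha}{ds}\varphi_0+\alpha\psi_0+g$ so that $f,g,h$ have zero mean \eqref{5.fgh0}, treats $(\alpha,\frac{d\alpha}{ds})$ by the ODE energies $E_3,E_4$ (only $\frac{d\alpha}{ds}$ is dissipated; $\alpha$ merely stays bounded), and recovers coercivity on the mean-zero remainder through the antiderivative/negative-order Fourier energy $E_0$ combined with the Hardy-type inequality of Lemma \ref{lem_hardy2}, which is valid \emph{only} because $\hat f(0)=0$. The hierarchy $1\ll C_1\ll C_0$ in $E_5$ then yields $L_5\gtrsim\|f\|_{H^{1,m}}^2+\|g\|_{H^{0,m}}^2+(\frac{d\alpha}{ds})^2$ (but not $\alpha^2$), and the one term that cannot be absorbed, $\frac{b'}{b^2}\alpha\frac{d\alpha}{ds}$, is bounded by $Ce^{-\gamma s}E_5(s)$ using \eqref{5.b'.limit} and integrated away via $\Lambda(s)\sim1$. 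Incidentally, the obstacle you single out as the main one---differentiating the coefficient $\frac{e^{-s}}{b(t(s))^2}$---is comparatively benign: since $\frac{dt}{ds}=b(t)e^{s}$, one has $\frac{d}{ds}\bigl[\frac{e^{-s}}{b(t(s))^2}\bigr]=-\frac{e^{-s}}{b(t(s))^2}-\frac{2b'(t(s))}{b(t(s))^2}$, whose first term has a favorable sign and whose second is $O((1+t(s))^{-\gamma})$. Your treatment of the nonlinearity and the final three-way integration in $s$ match the paper, but without the mass decomposition and the zero-mean Hardy inequality the linear part of your energy estimate does not close.
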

We will give an outline of the proof of this proposition later.
Here, we prove Theorem \ref{dw.below} from Proposition \ref{5.apriori}.

\begin{proof}[Proof of Theorem \ref{dw.below}]
Let
$s_0$
be the constant given in Proposition \ref{5.apriori}.
From Proposition \ref{5.prop.le}, we have the unique local solution
$(v,w)$
having
$s_0 < \lifespan(v,w)$,
provided that
$\varepsilon$
is sufficiently small.
Moreover, by \eqref{5.s'}, we have the estimate
\begin{align*}%
	M(s_0) \le C \varepsilon^2 \| (v_0, w_0) \|_{H^{1,m}\times H^{0,m}}^2
\end{align*}%
with some constant
$C>0$.
Therefore, by \eqref{5.M.apriori}, we obtain
\begin{align}%
\label{5.M.apriori2}
	M(s) \le C_0 \varepsilon^2 I_0
	+C_1
	\begin{cases}
		\displaystyle
			e^{N(1+2/N-p)s}M(s)^p + e^{\frac{N}{2}\left(1+2/N-p\right)s} M(s)^{\frac{p+1}{2}}
				&\mbox{if} \ 1<p<1+\frac{2}{N},\\
		\displaystyle
			s \left( M(s)^p + M(s)^{\frac{p+1}{2}} \right)
				&\mbox{if} \ p = 1+\frac{2}{N},\\
		\displaystyle
			M(s)^p + M(s)^{\frac{p+1}{2}}
				&\mbox{if} \ p>1+\frac{2}{N},
	\end{cases}
\end{align}%
where
$C_0, C_1 > 0$ are some constants and
$I_0 = \| (v_0, w_0) \|_{H^{1,m}\times H^{0,m}}^2$.

We first consider the case
$1<p<1+2/N$.
Let
$S_1=S_1(\varepsilon) \ge s_0$
be the first time such that
$M(s)$ attains the value
\begin{align}%
\label{5.MS1}
	M(S_1) = 2C_0 \varepsilon^2I_0.
\end{align}%
We note that the blow-up alternative in Proposition \ref{5.prop.le} ensures such a time
$S_1$
exists if
$\lifespan (v,w) < \infty$.
We substitute
$s= S_1$
into \eqref{5.M.apriori2} to obtain
\begin{align*}%
	C_0 \varepsilon^2 I_0
	\le 2C_1 
	\max\left\{ e^{N(1+2/N-p)S_1}M(S_1)^p,
				 e^{\frac{N}{2}\left(1+2/N-p\right)S_1} M(S_1)^{\frac{p+1}{2}} \right\}.
\end{align*}%
From this and \eqref{5.MS1}, we have
\begin{align*}%
	\ep^{-(\frac{1}{p-1}-\frac{N}{2})^{-1}}
		\le C e^{S_1}
		\le C ( B(\lifespan(u)) + 1 ),
\end{align*}%
which implies \eqref{5.dw.below} in the case
$1<p<1+2/N$.

Next, we treat the case
$p = 1+2/N$.
In this case,
we take the time
$S_1$
the same as \eqref{5.MS1} and use \eqref{5.M.apriori2} to obtain
\begin{align*}%
	C_0 \varepsilon^2 I_0 
		\le 2C S_1 \max\left\{ (\varepsilon^2 I_0)^p, (\varepsilon^2 I_0)^{\frac{p+1}{2}} \right\}
		\le C S_1 \varepsilon^{p+1},
\end{align*}%
provided that
$\varepsilon$
is sufficiently small.
Thus, we conclude
\begin{align*}%
	\varepsilon^{-(p-1)} \le CS_1 \le C \log ( B(\lifespan(u)) + 1 ),
\end{align*}%
which implies \eqref{5.dw.below} in the case
$p=1+2/N$.

Finally, we consider the case
$p > 1+2/N$.
In this case, we have
\begin{align*}%
	M(s) \le C_0 \varepsilon^2 I_0
		+ C_1 \left( M(s)^p + M(s)^{\frac{p+1}{2}} \right).
\end{align*}%
From this, we have the estimate
\begin{align*}%
	M(s) \le C \varepsilon^2
\end{align*}%
for sufficiently small
$\varepsilon$.
This and the blow-up alternative imply
$\lifespan (u) = \infty$.
\end{proof}

\subsection{Outline of the proof of a priori estimate}
We give an outline of the proof of Proposition \ref{5.apriori}.
Let
\begin{align*}%
	\alpha(s) = \int_{\mathbb{R}^N} v(y,s)\,dy
\end{align*}%
and
\begin{align*}%
	\varphi_0(y) = (4\pi)^{-N/2} \exp \left( - \frac{|y|^2}{4} \right),
	\quad \psi_0(y) = \Delta_y \varphi_0(y).
\end{align*}%
We note that
$\alpha(s)$
makes sense, since
$v(s) \in H^{2,m}(\mathbb{R}^N) \subset L^1(\mathbb{R}^N)$
by \eqref{5.m}.
We decompose
$(v,w)$
into
\begin{align*}%
	v(y,s) &= \alpha(s) \varphi_0(y) + f(y,s),\\
	w(y,s) &= \frac{d\alpha}{ds}(s) \varphi_0(y)
			+ \alpha(s) \psi_0(y) + g(y,s),
\end{align*}%
where
$(f,g)$
are expected to be remainder terms.
Noting that
$\Delta_y \varphi_0 = - \frac{y}{2} \cdot \nabla_y \varphi_0 - \frac{N}{2} \varphi_0$,
$\int_{\mathbb{R}^N} \varphi_0(y)\,dy = 1$,
and the equation \eqref{5.eq.vw},
we have
\begin{align}%
\label{5.alpha'}
	\frac{d\alpha}{ds}(s) &= \int_{\mathbb{R}^N} w(y,s)\, dy,\\
\label{5.alpha''}
	\frac{e^{-s}}{b(t(s))^2} \frac{d^2 \alpha}{ds^2} (s)
		&= \frac{e^{-s}}{b(t(s))^2} \frac{d\alpha}{ds}(s)
			- \frac{d\alpha}{ds}(s)
			+ \frac{b'(t(s))}{b(t(s))^2} \frac{d\alpha}{ds}(s)
			+ e^{\frac{N}{2}\left( 1+2/N - p \right)s} \int_{\mathbb{R}^N} |v(y,s)|^p \,dy.
\end{align}%
From this, we see that
$(f,g)$
satisfies the system
\begin{align}%
\label{5.eq.fg}
	\left\{ \begin{array}{l}
	\displaystyle
		f_s - \frac{y}{2}\cdot \nabla_y f - \frac{N}{2} f = g,\\[8pt]
	\displaystyle
		\frac{e^{-s}}{b(t(s))^2}
			\left( g_s - \frac{y}{2}\cdot \nabla_y g - \left( \frac{N}{2} + 1 \right) g \right)
				+ g = \Delta_y f + \frac{b'(t(s))}{b(t(s))^2} g + h,
	\end{array}\right.
\end{align}%
where
\begin{align*}%
	h(y,s) &= 
		\frac{e^{-s}}{b(t(s))^2}
		\left( -2 \frac{d\alpha}{ds}(s) \psi_0(y)
		+\alpha(s)
		\left(\frac{y}{2}\cdot\nabla_y\psi_0(y)
			+\left(\frac{N}{2}+1\right)\psi_0(y) \right) \right)\\
		&\quad + \frac{b'(t(s))}{b(t(s))^2} \alpha(s) \psi_0(y)
			+ e^{\frac{N}{2}\left( 1+2/N - p \right)s} |v|^p
			- e^{\frac{N}{2}\left( 1+2/N - p \right)s}
				\left( \int_{\mathbb{R}^N}|v|^p\, dy \right)
				\varphi_0(y).
\end{align*}%
Moreover, by the definition of
$(f,g)$
and the equation \eqref{5.eq.fg}, we easily obtain
\begin{align}%
\label{5.fgh0}
	\int_{\mathbb{R}^N} f(s,y)\,dy = \int_{\mathbb{R}^N} g(s,y)\,dy = \int_{\mathbb{R}^N} h(s,y)\,dy = 0.
\end{align}%
In the following, based on the property \eqref{5.fgh0},
we derive energy estimates for
$(f,g)$, $\alpha$,
and
$\frac{d\alpha}{ds}$.

\subsection{Energy estimates for $N=1$}
We introduce
\begin{align*}%
	F(y,s) = \int_{-\infty}^s f(z,s)\,dz,\quad
	G(y,s) = \int_{-\infty}^s g(z,s)\,dz,\quad
	H(y,s) = \int_{-\infty}^s h(z,s)\,dz.
\end{align*}%
Here, we note that the property \eqref{5.fgh0} implies
\begin{align*}%
	\| F(s) \|_{L^2} \le C \| y f(s) \|_{L^2},
\end{align*}%
(see \cite[Lemma 3.9]{Wakasugi17})
and the same estimates hold for
$G$ and $H$.
Moreover, from the equation \eqref{5.eq.fg}, we derive the following system for
$F$ and $G$.
\begin{align}
\label{eq_FG}
	\left\{\begin{array}{l}
	\displaystyle F_s-\frac{y}{2}F_y = G,\\[8pt]
	\displaystyle
    \frac{e^{-s}}{b(t(s))^2}\left( G_s - \frac{y}{2}G_y -G \right) + G
	= F_{yy} + \frac{b'(t(s))}{b(t(s))^2} G + H
	\end{array}\right.
\end{align}
We define
\begin{align*}
	E_0(s) &= \int_{\mathbb{R}}
		\left( \frac{1}{2}\left( F_y^2 +  \frac{e^{-s}}{b(t(s))^2}G^2 \right)
		+ \frac{1}{2}F^2 + \frac{e^{-s}}{b(t(s))^2} FG \right) dy,\\
	E_1(s) &= \int_{\mathbb{R}}
		\left( \frac{1}{2} \left( f_y^2 + \frac{e^{-s}}{b(t(s))^2}g^2 \right)
		+ f^2 + 2\frac{e^{-s}}{b(t(s))^2}fg \right)dy,\\
	E_2(s) & = \int_{\mathbb{R}} y^2
		\left[ \frac{1}{2} \left( f_y^2 + \frac{e^{-s}}{b(t(s))^2}g^2 \right)
		+ \frac{1}{2} f^2 + \frac{e^{-s}}{b(t(s))^2}fg \right] dy,\\
	E_3(s) &= \frac{1}{2} \frac{e^{-s}}{b(t(s))^2}\left( \frac{d\alpha}{ds}(s) \right)^2
		+ e^{-s/2}\alpha(s)^2,\\
	E_4(s) &= \frac{1}{2}\alpha(s)^2
		+ \frac{e^{-s}}{b(t(s))^2}\alpha(s) \frac{d\alpha}{ds}(s),
\end{align*}
and
\begin{align*}
	E_5(s) = \sum_{j=0}^4 C_j E_j(s),
\end{align*}
where
$C_j\ (j=0,\ldots, 4)$
are constants such that
$C_2 = C_3 = C_4 =1$
and
$1 \ll C_1 \ll C_0$.
By a straightforward calculation, we can see that
there exists sufficiently large
$s_0 >0$
such that
$E_5(s)$
has the bound
\begin{align*}
	E_5(s) \sim
	\| f(s) \|_{H^{1,1}}^2 + \frac{e^{-s}}{b(t(s))^2} \| g(s) \|_{H^{0,1}}^2
		+ \alpha(s)^2 +  \frac{e^{-s}}{b(t(s))^2} \left( \frac{d\alpha}{ds}(s) \right)^2
\end{align*}
for
$s \ge s_0$.
Furthermore, we have the following energy estimate.

\begin{lemma}\label{lem_en0}
{{\rm (\cite[Lemma 4.4]{FujiwaraIkedaWakasugi}, \cite[Lemmas 3.10--3.17]{Wakasugi17})}}
There exists
$s_0 >0$
such that we have the energy identity
\begin{align*}
	\frac{d}{ds}E_5(s) + \frac12 \sum_{j=0}^3 C_j E_j(s) + L_5(s) = R_5(s)
\end{align*}
for
$s >0$,
where
$L_5(s)$
has the lower bound
\begin{align*}%
	&\| f(s) \|_{H^{1,1}}^2 + \| g(s) \|_{H^{0,1}}^2 + \left( \frac{d\alpha}{ds}(s) \right)^2
	\le C L_5(s)
\end{align*}%
for
$s \ge s_0$,
and
$R_5(s)$
satisfies the estimate
\begin{align*}
	|R_5(s)| &\le \frac12 L_5(s)
		+ C e^{-\gamma s} E_5(s)
		+ C e^{(3-p)s} E_5(s)^p
		+ C e^{\frac{3-p}{2} s} E_5(s)^{\frac{p+1}{2}}
\end{align*}
for
$s \ge s_0$, 
where
$\gamma$
is given in \eqref{5.b'.limit}.
\end{lemma}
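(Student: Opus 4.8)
The plan is to differentiate $E_5=\sum_{j=0}^4 C_jE_j$ in $s$, substitute the evolution equations for each block — \eqref{eq_FG} for the primitives $(F,G)$, \eqref{5.eq.fg} for $(f,g)$, and \eqref{5.alpha'}--\eqref{5.alpha''} for $\alpha$ — and then integrate by parts in $y$. The mechanism producing dissipation is the standard one for the scaling-variable energy method: each first-order transport operator $\frac{y}{2}\partial_y$ integrates by parts against itself to yield a definite quadratic form, while the coefficient $\frac{e^{-s}}{b(t(s))^2}$ is small for large $s$ by \eqref{5.b.limit}, so the ``wave part'' degenerates to a parabolic problem whose dissipation is extracted through the cross terms $FG$, $fg$, and $\alpha\frac{d\alpha}{ds}$ built into $E_0,E_1,E_2,E_4$. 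I would carry out the five computations $\frac{d}{ds}E_j$ separately, then collect all sign-definite contributions into $-L_5$ together with $-\frac12\sum_{j=0}^3 C_jE_j$, and place everything else into $R_5$.

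The key dissipative terms arise as follows. Testing the second equation of \eqref{5.eq.fg} against $g$ (the $\frac12\frac{e^{-s}}{b^2}g^2$ part of $E_1$) turns the undifferentiated term $+g$ into genuine dissipation $\int g^2\,dy$; testing it against $f$ through the cross term $2\frac{e^{-s}}{b^2}fg$ turns $\Delta_y f=f_{yy}$ into $-\int f_y^2\,dy$ after one integration by parts, i.e.\ the parabolic dissipation in $f_y$. The coupling terms $\int f_y g_y\,dy$ generated by $\frac{d}{ds}\int\frac12 f_y^2$ and by the cross term cancel by construction. Repeating the computation with the $y^2$ weight ($E_2$) produces dissipation of $\|yf_y\|_{L^2}$ and $\|yg\|_{L^2}$, and the primitive energy $E_0$, combined with the Poincar\'e-type inequality $\|F\|_{L^2}\le C\|yf\|_{L^2}$ recorded above, recovers the missing low-frequency control, so that altogether $L_5\gtrsim \|f\|_{H^{1,1}}^2+\|g\|_{H^{0,1}}^2+(\frac{d\alpha}{ds})^2$ as claimed. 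The constant hierarchy $1\ll C_1\ll C_0$ and the freedom to take $s_0$ large are exactly what is needed to absorb all indefinite cross terms between different $E_j$ by Young's inequality.

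For the remainder, the terms of the form $Ce^{-\gamma s}E_5$ come from two sources: the coefficient $\frac{b'(t(s))}{b(t(s))^2}$ appearing explicitly in \eqref{5.eq.fg}, and the contribution of $\frac{d}{ds}\bigl(\frac{e^{-s}}{b(t(s))^2}\bigr)$, both of which decay exponentially in $s$ by \eqref{5.b'.limit} together with the change of variables $s=\log(B(t)+1)$ and Lemma \ref{lem_b_properties}. The genuinely nonlinear contributions come from the last terms of $h$; writing $v=\alpha\varphi_0+f$, using the mean-zero property \eqref{5.fgh0} (which is precisely why the primitive variables are available) and the one-dimensional Sobolev embedding $H^1(\mathbb{R})\hookrightarrow L^\infty(\mathbb{R})$, one estimates the nonlinear pairings $\int h\,g\,dy$ and $\int H\,G\,dy$ by $\|v\|^p$ times a power of the energy. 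Bounding such a pairing by Cauchy--Schwarz produces $Ce^{\frac{3-p}{2}s}E_5^{(p+1)/2}$, while bounding it by Young's inequality against the available dissipation produces $Ce^{(3-p)s}E_5^p$ at the cost of a harmless half-dissipation, which is exactly the $\frac12 L_5$ appearing in the bound for $R_5$. Here I use that for $N=1$ the prefactor is $e^{\frac{N}{2}(1+2/N-p)s}=e^{\frac{3-p}{2}s}$ and $N(1+2/N-p)=3-p$, so the exponential weights match the statement.

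The essential difficulty is not any single estimate but the coercivity of $L_5$: the scaling generator has a zero mode (the Gaussian $\varphi_0$), so the naive energy carries no decay, and one must verify that after projecting this mode out via the splitting $v=\alpha\varphi_0+f$ and passing to the primitives $(F,G)$ the surviving quadratic form is positive-definite over the full weighted norm. Making this rigorous forces the simultaneous calibration of the five constants $C_j$ and of $s_0$, and it is here that the smallness of $\frac{e^{-s}}{b^2}$ from \eqref{5.b.limit} and the decay $e^{-\gamma s}$ from \eqref{5.b'.limit} must be quantitatively strong enough to dominate the leftover error terms. This bookkeeping, executed in full in \cite[Lemmas 3.10--3.17]{Wakasugi17} and \cite[Lemma 4.4]{FujiwaraIkedaWakasugi}, is what the present lemma packages into a single identity.
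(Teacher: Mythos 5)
Your proposal is correct and takes essentially the same route as the paper, which in fact omits this computation entirely and defers to \cite[Lemmas 3.10--3.17]{Wakasugi17} and \cite[Lemma 4.4]{FujiwaraIkedaWakasugi}, where precisely the calculation you outline is carried out: the decomposition $v=\alpha\varphi_0+f$, the primitives $(F,G)$ giving the low-frequency control via $\|F\|_{L^2}\le C\|yf\|_{L^2}$, the weighted energies with the hierarchy $1\ll C_1\ll C_0$ and large $s_0$, the smallness of $\frac{e^{-s}}{b(t(s))^2}$ from \eqref{5.b.limit}, and the $e^{-\gamma s}E_5(s)$ term traced to the $\frac{b'(t(s))}{b(t(s))^2}\,\alpha\frac{d\alpha}{ds}$ contribution (exactly as the paper's remark after the lemma explains). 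One minor bookkeeping slip, which would be caught mechanically in the full computation: the cancellation of $\int f_y g_y\,dy$ occurs between $\frac{d}{ds}\int\frac12 f_y^2\,dy$ and the $f_{yy}$ term arising when the second equation of \eqref{5.eq.fg} is tested against $g$ (the $\frac{e^{-s}}{2b(t(s))^2}g^2$ part of $E_1$), not against the cross term $2\frac{e^{-s}}{b(t(s))^2}fg$, whose role is to produce the $-\int f_y^2\,dy$ dissipation as you correctly state elsewhere.
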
%
\begin{remark}
{\rm (i)}
We can write down
$L_5(s)$
and
$R_5(s)$
explicitly (see \cite[Lemma 4.4]{FujiwaraIkedaWakasugi}, \cite[Lemmas 3.10--3.17]{Wakasugi17}).

\noindent
{\rm (ii)}
The term
$e^{-\gamma s} E_5(s)$
comes from
$b(t(s))^{-1} \frac{db}{dt}(t(s)) \alpha (s) \frac{d\alpha}{ds}(s)$,
which appears in the remainder term
$R_5(s)$,
since we have
$E_5(s) \ge \alpha(s)^2$
but we do not have
$L_5(s) \ge \alpha(s)^2$.
\end{remark}

The proof of the above lemma is
the completely same as that of
\cite[Lemmas 3.10--3.17]{Wakasugi17},
which needs simple but tedious computations.
Hence, we omit the detail.

\subsection{Energy estimates for $N \ge 2$}
When
$N \ge 2$,
we introduce
\begin{align*}
	\hat{F}(\xi,s) = |\xi|^{-N/2-\delta}\hat{f}(\xi,s),\quad
	\hat{G}(\xi,s) = |\xi|^{-N/2-\delta}\hat{g}(\xi,s),\quad
	\hat{H}(\xi,s) = |\xi|^{-N/2-\delta}\hat{h}(\xi,s),
\end{align*}
where
$0<\delta<1$,
and
$\hat{f}(\xi,s)$ denotes the Fourier transform of $f(y,s)$ with respect to
the space variable.

The following lemma is an improvement of
\cite[Lemma 3.11]{Wakasugi17}
by Fukuya \cite{Fukuya}.
\begin{lemma}\label{lem_hardy2}
Let $m>N/2$ and $f(y) \in H^{0,m}(\mathbb{R}^N)$ be a function satisfying
$\hat{f}(0) = (2\pi)^{-N/2} \int_{\mathbb{R}^N}f(y)dy = 0$.
Let
$\hat{F}(\xi) = |\xi|^{-N/2-\delta}\hat{f}(\xi)$
with some $0<\delta<1$.
Then,
there exists a constant $C(N,m,\delta)>0$ such that
\begin{align}
\label{hardy2}
	\| F \|_{L^2} \le C(N,m,\delta) \| f \|_{H^{0,m}}
\end{align}
holds.
\end{lemma}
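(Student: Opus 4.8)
The plan is to pass to the Fourier side and reduce the claim, via Plancherel's theorem, to the weighted bound
\[
\int_{\mathbb{R}^N}|\xi|^{-N-2\delta}|\hat f(\xi)|^2\,d\xi
\le C(N,m,\delta)\,\|f\|_{H^{0,m}}^2,
\]
since $\|F\|_{L^2}=\|\hat F\|_{L^2}$ with $\hat F(\xi)=|\xi|^{-N/2-\delta}\hat f(\xi)$. I would then split the integral into the high-frequency region $\{|\xi|\ge 1\}$ and the low-frequency region $\{|\xi|\le 1\}$, the only genuine singularity of the weight being at $\xi=0$. On $\{|\xi|\ge 1\}$ the weight satisfies $|\xi|^{-N-2\delta}\le 1$, so this piece is bounded by $\|\hat f\|_{L^2}^2=\|f\|_{L^2}^2\le \|f\|_{H^{0,m}}^2$ and needs no structure at all.

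The whole point is the low-frequency region, where $|\xi|^{-N-2\delta}$ fails to be locally integrable (its exponent exceeds $-N$) and the hypothesis $\hat f(0)=0$ must be used. Here I would exploit the vanishing of the zeroth moment through the difference representation
\[
\hat f(\xi)=\hat f(\xi)-\hat f(0)
=(2\pi)^{-N/2}\int_{\mathbb{R}^N}\bigl(e^{-iy\cdot\xi}-1\bigr)f(y)\,dy,
\]
which is legitimate because $m>N/2$ forces $f\in L^1(\mathbb{R}^N)$ (by Cauchy--Schwarz against $(1+|y|)^{-m}\in L^2$), so $\hat f$ is continuous and $\hat f(0)$ is meaningful. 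For a parameter $\theta\in(0,1]$ to be chosen I would use $|e^{-iy\cdot\xi}-1|\le C|y|^\theta|\xi|^\theta$, obtaining the pointwise decay
\[
|\hat f(\xi)|\le C|\xi|^\theta\int_{\mathbb{R}^N}|y|^\theta|f(y)|\,dy
\le C|\xi|^\theta\,\bigl\||y|^\theta(1+|y|)^{-m}\bigr\|_{L^2}\,\|f\|_{H^{0,m}},
\]
where the weighted norm on the right is finite precisely when $\theta<m-\tfrac{N}{2}$.

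Inserting this bound into the low-frequency integral yields
\[
\int_{|\xi|\le 1}|\xi|^{-N-2\delta}|\hat f(\xi)|^2\,d\xi
\le C\|f\|_{H^{0,m}}^2\int_{|\xi|\le 1}|\xi|^{-N-2\delta+2\theta}\,d\xi,
\]
and the remaining radial integral converges as soon as $\theta>\delta$. The argument therefore closes by choosing $\theta$ in the window $\delta<\theta<m-\tfrac{N}{2}$ (with $\theta\le 1$), which is the step where the interplay of the three parameters is delicate: the decay rate $\theta$ extracted from $\hat f(0)=0$ must simultaneously beat the singular weight (requiring $\theta>\delta$) and stay within the decay budget supplied by the weight $(1+|y|)^m$ (requiring $\theta<m-\tfrac{N}{2}$).

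The main obstacle is exactly this low-frequency estimate. An equivalent and slightly sharper route, which I expect is the form of the improvement of Fukuya \cite{Fukuya} over the cruder bound of \cite{Wakasugi17}, is to observe that $f\in H^{0,m}$ is the same as $\hat f\in H^{m}(\mathbb{R}^N_\xi)$; then for $m>\tfrac{N}{2}$ Sobolev embedding gives $\hat f\in C^{0,\theta}$ for $\theta$ up to $m-\tfrac{N}{2}$ (and $\hat f\in C^1$, so $\theta=1$ is admissible, once $m>\tfrac{N}{2}+1$), and writing $\hat f(\xi)=\hat f(\xi)-\hat f(0)$ again produces $|\hat f(\xi)|\le C|\xi|^\theta\|f\|_{H^{0,m}}$. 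Either way the entire difficulty is confined to controlling $\hat f$ near the origin, and the high-frequency part contributes only the harmless term $\|f\|_{L^2}^2$.
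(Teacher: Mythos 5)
Your argument is essentially the paper's own (i.e., Fukuya's) proof, which the paper records in the remark following the lemma: Plancherel, a high/low frequency split, and at low frequencies the difference representation $\hat f(\xi)=\hat f(\xi)-\hat f(0)$ combined with the interpolated inequality $|e^{-ix\cdot\xi}-1|\le 2^{1-\delta'}(|x||\xi|)^{\delta'}$ and Cauchy--Schwarz against the weight $(1+|x|)^{m}$; your $\theta$ is the paper's $\delta'$. One attribution is backwards, though: the elementary route you take first \emph{is} Fukuya's improvement, while your ``slightly sharper'' variant via Sobolev embedding of $\hat f\in H^m$ is closer in spirit to the older argument of \cite{Wakasugi17}, which used an $L^\infty$--$L^1$ estimate for the Fourier transform and therefore needed a stronger restriction on $m$.

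One substantive point, since you flagged the choice of $\theta$ as the delicate step but did not resolve it: the window $\delta<\theta<\min\{1,\,m-\tfrac{N}{2}\}$ that your proof requires is nonempty only if $\delta<m-\tfrac{N}{2}$, and this is \emph{not} implied by the hypotheses as literally stated, since $m>\tfrac{N}{2}$ and $0<\delta<1$ are independent conditions. This is not a defect of your argument relative to the paper: the paper's sketch needs exactly the same constraint, and in fact the estimate genuinely fails when $m-\tfrac{N}{2}<\delta$. Indeed, take $\hat f(\xi)=|\xi|^{a}\chi(\xi)$ with a smooth cutoff $\chi$, $\chi(0)=1$, and $m-\tfrac{N}{2}<a<\delta$; then $\hat f(0)=0$ and $f\in H^{0,m}(\R^N)$ (since $f(x)$ decays like $|x|^{-N-a}$), yet $\int_{|\xi|\le 1}|\xi|^{-N-2\delta}|\hat f(\xi)|^2\,d\xi=\infty$. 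So the lemma must be read with the implicit restriction $\delta<m-\tfrac{N}{2}$, which is how it is actually used in the paper ($\delta$ there is a free parameter that may be taken small). Under that reading your proof is complete and coincides with the paper's.
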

\begin{remark}
In \cite[Lemma 3.11]{Wakasugi17}, 
the $L^\infty$-$L^1$ estimate for the Fourier 
transform is used and then an extra restriction for $m$ is needed. 
In contrast, in \cite{Fukuya} he gave a simple proof in which he used only 
the definition of Fourier transform and a basic inequality 
$|e^{-ix \cdot \xi}-1|\leq |x||\xi|$. These give 
the H\"older continuity of $\hat{f}$:
\[
|\hat{f}(\xi)-\hat{f}(0)|=
\left|\int_{\R^N}(e^{-ix\cdot \xi}-1)f(x)\,dx\right|
\leq 
2^{1-\delta'}|\xi|^{\delta'}
\int_{\R^N}|x|^{\delta'}|f(x)|\,dx, \quad (|\xi|<1, 0\le \delta' \le 1)
\]
As a result, he could prove the assertion of Lemma \ref{lem_hardy2} 
by assuming only the condition $m>N/2$ which is equivalent to the continuity of 
the inclusion $H^{0,m}\subset L^1$. 
\end{remark}
Also, by a direct calculation, one has
\begin{align*}
	\| f \|_{L^2}^2 \le \eta \| \nabla f \|_{L^2}^2 + C \| \nabla F \|_{L^2}^2
\end{align*}
for any small
$\eta>0$,
where the constant
$C>0$
depends on
$\eta$
(see \cite[(3.39)]{Wakasugi17}, \cite[Lemma 4.6]{FujiwaraIkedaWakasugi}).

In this case
$\hat{F}$ and $\hat{G}$ satisfy the following system:
\begin{align*}
	\left\{ \begin{array}{ll}
	\displaystyle \hat{F}_s + \frac{\xi}{2}\cdot \nabla_{\xi}\hat{F}
		+\frac{1}{2}\left( \frac{N}{2} + \delta \right) \hat{F} = \hat{G},
	&(\xi,s) \in \mathbb{R}^N\times (0,S),\\
	\displaystyle \frac{e^{-s}}{b(t(s))^2}\left( \hat{G}_s + \frac{\xi}{2}\cdot \nabla_{\xi} \hat{G}
		+ \frac{1}{2} \left( \frac{N}{2}+\delta-2 \right) \hat{G} \right) + \hat{G}
		= -|\xi|^2 \hat{F} + \frac{b'(t(s))}{b(t(s))^2} \hat{G}+ \hat{H},
	&(\xi,s) \in \mathbb{R}^N\times (0,S).
	\end{array} \right.
\end{align*}

We define the following energy
\begin{align*}
	E_0(s) &= {\rm Re} \int_{\mathbb{R}^N}
		\left( \frac{1}{2}
			\left( |\xi|^2 |\hat{F}|^2 + \frac{e^{-s}}{b(t(s))^2} |\hat{G}|^2 \right)
			+ \frac{1}{2} |\hat{F}|^2 + \frac{e^{-s}}{b(t(s))^2}\hat{F} \bar{\hat{G}}  \right)
			d\xi,\\
	E_1(s) &= \int_{\mathbb{R}^N}
		\left( \frac{1}{2}\left( |\nabla_y f |^2 + \frac{e^{-s}}{b(t(s))^2}g^2 \right)
		+ \left( \frac{N}{4} + 1 \right)
			\left( \frac{1}{2} f^2 + \frac{e^{-s}}{b(t(s))^2}fg \right) \right) dy,\\
	E_2(s) &= \int_{\mathbb{R}^N}
		|y|^{2m} \left[
			\frac{1}{2}\left( |\nabla_y f|^2 + \frac{e^{-s}}{b(t(s))^2}g^2 \right)
				+\frac{1}{2}f^2 + \frac{e^{-s}}{b(t(s))^2}fg \right] dy,\\
	E_3(s) &= \frac{1}{2}\frac{e^{-s}}{b(t(s))^2}\left( \frac{d\alpha}{ds}(s) \right)^2
		+ e^{-2\lambda s}\alpha(s)^2,\\
	E_4(s) &= \frac{1}{2}\alpha(s)^2
		+ \frac{e^{-s}}{b(t(s))^2}\alpha(s) \frac{d\alpha}{ds}(s),
\end{align*}
where $\lambda$ is a parameter satisfying
$0 < \lambda < \min\{ \frac12, \frac{m}{2}-\frac{N}{4} \}$.
Moreover, we define
\begin{align*}
	E_5(s) = \sum_{j=0}^4 C_j E_j(s),
\end{align*}
with positive constants
$C_j\ (j=0,\ldots, 4)$
such that
$C_2 = C_3 = C_4 =1$ and
$1 \ll C_1 \ll C_0$.
Then, as in the case of
$N=1$,
there exists sufficiently large
$s_0 > 0$
such that
$E_5(s)$
has the bound
\begin{align*}%
	E_5(s)
	\sim 
	\| f(s) \|_{H^{1,m}}^2 + \frac{e^{-s}}{b(t(s))^2} \| g(s) \|_{H^{0,m}}^2
		+ \alpha(s)^2 +  \frac{e^{-s}}{b(t(s))^2} \left( \frac{d\alpha}{ds}(s) \right)^2
\end{align*}%
for
$s \ge s_0$.
Moreover,
in the same manner as the case
$N=1$,
we have the following energy estimate.

\begin{lemma}\label{lem_en2}
{\rm (\cite[Lemma 4.7]{FujiwaraIkedaWakasugi}, \cite[Lemmas 3.12--3.17]{Wakasugi17}) }
There exist constants
$0 < \lambda < \min\{ \frac12, \frac{m}{2}-\frac{N}{4} \}$
and
$s_0 >0$
such that we have the energy identity
\begin{align*}
	\frac{d}{ds}E_5(s) + 2\lambda \sum_{j=0}^3 C_j E_j(s) + L_5(s) = R_5(s),
\end{align*}
for
$s >0$,
where
$L_5(s)$
has the lower bound
\begin{align*}%
	&\| f(s) \|_{H^{1,m}}^2 + \| g(s) \|_{H^{0,m}}^2 + \left( \frac{d\alpha}{ds}(s) \right)^2
	\le C L_5(s)
\end{align*}%
for
$s \ge s_0$,
and
$R_5(s)$
satisfies the estimate
\begin{align*}
	|R_5(s)| &\le \frac12 L_5(s)
		+ C e^{-\gamma s} E_5(s)
		+ C e^{N(1+2/N-p)s} E_5(s)^p
		+ C e^{\frac{N}{2}(1+2/N-p) s} E_5(s)^{\frac{p+1}{2}}
\end{align*}
for
$s \ge s_0$,
where
$\gamma$
is given in \eqref{5.b'.limit}.
\end{lemma}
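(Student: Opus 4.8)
The plan is to follow the $N=1$ argument of Lemma~\ref{lem_en0} essentially verbatim: differentiate $E_5=\sum_{j=0}^4 C_jE_j$ in $s$, substitute the evolution equations for $(\hat F,\hat G)$, for $(f,g)$ in \eqref{5.eq.fg}, and for $\alpha$ in \eqref{5.alpha'}--\eqref{5.alpha''}, and integrate by parts in every first-order transport term. The sole genuinely new ingredient relative to $N=1$ is that the role played there by a primitive of $f$ is now taken by the fractionally-integrated quantity $\hat F=|\xi|^{-N/2-\delta}\hat f$, whose $L^2$ norm is controlled by $\|f\|_{H^{0,m}}$ through Lemma~\ref{lem_hardy2}; this is precisely what forces the weight exponent $m>N/2$ in \eqref{5.m}. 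The zero-mean conditions \eqref{5.fgh0} are what make $\hat F,\hat G,\hat H$ well defined near $\xi=0$.

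First I would compute $\frac{d}{ds}E_0$ from the Fourier system for $(\hat F,\hat G)$. The transport operators $\tfrac{\xi}{2}\cdot\nabla_\xi$ integrate by parts against the quadratic energy densities to produce terms with favourable signs, while the undifferentiated coefficients $\tfrac12(\tfrac N2+\delta)$ and $\tfrac12(\tfrac N2+\delta-2)$ supply the decay rates; the genuine damping $+\hat G$ in the second equation yields dissipation of the $\tfrac{e^{-s}}{b(t(s))^2}|\hat G|^2$ density, and differentiating the cross term $\tfrac{e^{-s}}{b(t(s))^2}\hat F\bar{\hat G}$ transfers it into control of $\||\xi|\hat F\|_{L^2}^2=\|\nabla_y F\|_{L^2}^2$. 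The same computation applied to the physical-space energies $E_1$ (weight $1$) and $E_2$ (weight $|y|^{2m}$) produces dissipation of $\|\nabla_y f\|_{H^{0,m}}^2$ and $\|g\|_{H^{0,m}}^2$. To upgrade this to the full lower bound $\|f\|_{H^{1,m}}^2+\|g\|_{H^{0,m}}^2+(\tfrac{d\alpha}{ds})^2\le CL_5$ claimed in the statement I would use the interpolation inequality $\|f\|_{L^2}^2\le\eta\|\nabla f\|_{L^2}^2+C_\eta\|\nabla F\|_{L^2}^2$ recalled just before the lemma, together with Lemma~\ref{lem_hardy2}: the missing zeroth-order norm $\|f\|_{L^2}^2$ is recovered from the gradient dissipation and the $\|\nabla F\|_{L^2}^2$ content coming out of $\frac{d}{ds}E_0$.

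Next I would treat $E_3$ and $E_4$, the energies of $\alpha$ and $\tfrac{d\alpha}{ds}$, using the scalar equations \eqref{5.alpha'}--\eqref{5.alpha''}. The crucial point, already flagged in the remark after Lemma~\ref{lem_en0}, is that the coefficient $\tfrac{b'}{b^2}$ produces a term $\tfrac{b'(t(s))}{b(t(s))^2}\alpha\tfrac{d\alpha}{ds}$ that cannot enter $L_5$, since $E_5$ controls $\alpha^2$ but $L_5$ does not. Here I would invoke \eqref{5.b'.limit}, which after the change of variables gives $|b'/b^2|\le Ce^{-\gamma s}$ along $t=t(s)$, so that this contribution is bounded by $Ce^{-\gamma s}E_5$ and is placed into the remainder $R_5$; this is the origin of the $Ce^{-\gamma s}E_5(s)$ summand. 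The weights $e^{-2\lambda s}$ in $E_3$ and the coefficients $\tfrac{e^{-s}}{b(t(s))^2}=\tfrac{1}{b(t(s))^2(B(t(s))+1)}$, which are negligible for large $s$ by \eqref{5.b.limit}, are what allow all these manipulations to close for $s\ge s_0$ with $s_0$ large.

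Finally, the nonlinear contribution enters through the inhomogeneity $h$. Writing $v=\alpha\varphi_0+f$ and noting that the $\varphi_0$-subtraction in $h$ preserves \eqref{5.fgh0}, I would estimate $\int_{\R^N}|v|^p\,dy$ and its weighted and Fourier analogues by Gagliardo--Nirenberg and the Sobolev embedding $H^1\hookrightarrow L^{2p}$, available exactly under the restriction $p\le\frac{N}{N-2}$ in \eqref{5.p}; tracking the scaling prefactor $e^{\frac N2(1+2/N-p)s}$ from the equation then yields the two nonlinear terms $Ce^{N(1+2/N-p)s}E_5^p$ and $Ce^{\frac N2(1+2/N-p)s}E_5^{(p+1)/2}$. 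To assemble the identity one chooses $\lambda<\min\{\tfrac12,\tfrac m2-\tfrac N4\}$ (the second bound ensuring the weighted energy $E_2$ dissipates at rate $2\lambda$), and the hierarchy $1\ll C_1\ll C_0$ so that the cross terms coupling distinct $E_j$ are dominated by the diagonal dissipation. The main obstacle is exactly this bookkeeping of couplings: verifying that, after absorbing half of the remainder via the $\tfrac12L_5(s)$ on the right and collecting signs, the surviving good terms reassemble into $2\lambda\sum_{j=0}^3C_jE_j(s)+L_5(s)$ with $L_5$ enjoying the stated lower bound. These are the simple but tedious computations of \cite[Lemmas 3.12--3.17]{Wakasugi17}, so I would carry them out by direct analogy and omit the full detail.
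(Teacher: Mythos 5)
Your proposal follows essentially the same route as the paper, which likewise reduces the lemma to the $N=1$ computation plus the genuinely new $N\ge 2$ ingredients you identify: the Fourier-side quantities $\hat F,\hat G,\hat H$ made admissible by the zero-mean property \eqref{5.fgh0} and controlled through Lemma \ref{lem_hardy2}, the interpolation inequality recovering $\|f\|_{L^2}^2$ from $\|\nabla f\|_{L^2}^2$ and $\|\nabla F\|_{L^2}^2$, the absorption of the $\frac{b'}{b^2}$-terms into $Ce^{-\gamma s}E_5(s)$ via \eqref{5.b'.limit}, the nonlinear terms via Sobolev embedding under \eqref{5.p}, and the choices $\lambda<\min\{\tfrac12,\tfrac m2-\tfrac N4\}$ and $1\ll C_1\ll C_0$, with the remaining term-by-term bookkeeping deferred (exactly as the paper does) to \cite[Lemma 4.7]{FujiwaraIkedaWakasugi} and \cite[Lemmas 3.12--3.17]{Wakasugi17}. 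Your identification of where the constraint $\lambda<\tfrac m2-\tfrac N4$ arises (dissipation rate of the $|y|^{2m}$-weighted energy $E_2$) and of the origin of the $e^{-\gamma s}E_5$ remainder matches the paper's own remarks, so the proposal is correct and essentially identical in approach.
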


\subsection{Proof of Proposition \ref{5.apriori}}
Let
$\varepsilon_1 > 0$
be sufficiently small so that
the local solution $(v,w)$ of \eqref{5.eq.vw} exists for 
$s > s_0$
(see Proposition \ref{5.prop.le}).
Therefore, we can apply Lemmas \ref{lem_en0}, \ref{lem_en2}.
Putting
\[
	\Lambda (s) := \exp \left( -C\int_{s_0}^s e^{- \gamma \sigma}\, d\sigma \right),
\]
which satisfies
$\Lambda(s) \sim 1$,
we have
\[
	\Lambda(s) E_5(s) \le E_5(s_0)
		+ C \int_{s_0}^s
		\left[ \Lambda(\sigma) e^{N(1+2/N-p) \sigma} E_5(\sigma)^p
			+ \Lambda(\sigma) e^{\frac{N}{2}(1+2/N-p) \sigma} E_5(\sigma)^{\frac{p+1}{2}}
						\right] \, d\sigma.
\]
Consequently,
if we define
$M(s)$
by \eqref{5.energy.M},
then by calculating the integral, we easily obtain \eqref{5.M.apriori},
which completes the proof.

\section*{Appendix: Lower bound of lifespan for the Fujita equation}

We would like to give a short proof of 
lower bound of the lifespan of solutions to 
the standard semilinear heat equation of Fujita type: 
\begin{equation}\label{nh}
\begin{cases}
\pa_t u(x,t)-\Delta u(x,t)=|u(x,t)|^p, 
& (x,t)\in \R^N \times (0,T),
\\
u(x,0)=f_\ep(x)=\ep f(x)\geq 0, 
& x\in \R^N.
\end{cases}
\end{equation}
The following argument is based on Quittner and Souplet \cite{QuittnerSouplet}.
Put $0\leq f\in L^1(\R^N)\cap L^\infty(\R^N)$ and set
\[
U(x,t)=h(t)^{-\frac{1}{p-1}}e^{t\Delta}f_\ep.
\]
Then we have 
\begin{align*}
(\pa_t-\Delta)U(x,t)
=
-\frac{1}{p-1}h'(t)h(t)^{-\frac{p}{p-1}}e^{t\Delta}f_\ep+h(t)(\pa_t -\Delta)e^{t\Delta}f_\ep
=
-\frac{1}{p-1}h'(t)h(t)^{-\frac{p}{p-1}}e^{t\Delta}f_\ep.
\end{align*}
If 
$-\frac{1}{p-1}h'(t)h(t)^{-\frac{p}{p-1}}e^{t\Delta}f_\ep
\geq 
|h(t)^{-\frac{1}{p-1}}e^{t\Delta}f_\ep|^{p}=|U(x,t)|^p$ and $h(0)\geq 1$, then $U(x,t)$ can be used as 
a super-solution of nonlinear problem \eqref{nh}. Solving 
$h'(t)=-(p-1)\|e^{t\Delta}f_\ep\|_{L^\infty}^{p-1}$ with 
$h(0)=1$, we have 
\[
h(t)=1-(p-1)\int_{0}^t\|e^{s\Delta}f_\ep\|_{L^\infty}^{p-1}\,ds, 
\quad t<
t_\ep=\sup\{t>0\;;\;h(t)>0\}. 
\]
Then we have $\lifespan(U)=t_\ep$. 
On the other hand, we have
\[
\|e^{t\Delta}f_\ep\|_{L^\infty}\leq 
\min\Big\{\|f_\ep\|_{L^\infty},C_Nt^{-\frac{N}{2}}\|f_\ep\|_{L^1}\Big\}
=\ep \min\Big\{\|f\|_{L^\infty},C_Nt^{-\frac{N}{2}}\|f\|_{L^1}\Big\}.
\]
This implies that for $t>1$, 
\begin{align*}
\int_{0}^t\|e^{s\Delta}f_\ep\|_{L^\infty}^{p-1}\,ds
&=
\int_{0}^1\|e^{s\Delta}f_\ep\|_{L^\infty}^{p-1}\,ds
+
\int_{1}^t\|e^{s\Delta}f_\ep\|_{L^\infty}^{p-1}\,ds
\\
&
\leq 
\ep^{p-1}
\left(
\|f\|_{L^\infty}^{p-1}
+
C_N^{p-1}\|f\|_{L^1}^{p-1}\int_1^ts^{-\frac{N}{2}(p-1)}\,ds
\right).
\end{align*}
Since by a continuity method, we can extend the solution $u$ until $t=\lifespan(U)$  
and then we have $\lifespan(u)\geq \lifespan(U)$. 
On the other hand, the above estimate yields
\[
\lifespan(U)
\geq 
\begin{cases}
C\ep^{-(\frac{1}{p-1}-\frac{N}{2})^{-1}}
& \text{if }1<p<1+\frac{2}{N}
\\
\exp(C\ep^{-(p-1)})
& \text{if }p=1+\frac{2}{N}.
\end{cases}
\]

\subsection*{Acknowledgements}
This work is partially supported 
by Grant-in-Aid for Young Scientists Research (B) 
No.16K17619,
Grant-in-Aid for Young Scientists Research (B) 
No.15K17571,
and
Grant-in-Aid for Young Scientists Research (B) 
No.16K17625.


\end{document}